\numberwithin{equation}{section}
\def\3bar{{|\hspace{-.02in}|\hspace{-.02in}|}}
\def\E{{\mathcal{E}}}
\def\T{{\mathcal{T}}}
\def\pT{{\partial T}}
\def\W{{\mathcal{W}}}
\def\bn{{\mathbf{n}}}
\def\ljump{{[\![}}
\def\rjump{{]\!]}}
\def\bbeta{{\boldsymbol{\beta}}}
\newtheorem{example}{\bf Example}
\newtheorem{algorithm}{Primal-Dual Weak Galerkin Algorithm}[section]
\title {An \textbf{$L^p$}- Primal-Dual Weak Galerkin Method for Convection-Diffusion Equations}
\begin{document}

\author{
Waixiang Cao \thanks{School of Mathematical Sciences, Beijing Normal University, Beijing 100875, China (caowx@bnu.edu.cn). The research of Waixiang Cao was partially supported by NSFC grant No. 11871106.}
\and
Chunmei Wang \thanks{Department of Mathematics, University of Florida, Gainesville, FL 32611, USA (chunmei.wang@ufl.edu). The research of Chunmei Wang was partially supported by National Science Foundation Award DMS-1849483.}
\and
Junping Wang\thanks{Division of Mathematical
Sciences, National Science Foundation, Alexandria, VA 22314
(jwang@nsf.gov). The research of Junping Wang was supported in part by the
NSF IR/D program, while working at National Science Foundation.
However, any opinion, finding, and conclusions or recommendations
expressed in this material are those of the author and do not
necessarily reflect the views of the National Science Foundation.}}

\maketitle

\begin{abstract}
In this article, the authors present a new $L^p$- primal-dual weak Galerkin method ($L^p$-PDWG) for convection-diffusion equations with $p>1$. The existence and uniqueness of the numerical solution is discussed, and an optimal-order error estimate is derived in the $L^q$-norm for the primal variable, where $\frac 1p+\frac 1q=1$. Furthermore, error estimates are established for the numerical approximation of the dual variable in the standard $W^{m,p}$ norm, $0\le m\le 2$. Numerical results are presented to demonstrate the efficiency and accuracy of the proposed $L^p$-PDWG method.
\end{abstract}

\begin{keywords}
primal-dual weak Galerkin, finite element methods, second order elliptic problems, $L^p$ error estimate, polygonal or polyhedral meshes.
\end{keywords}

\begin{AMS}
Primary, 65N30, 65N15, 65N12, 74N20; Secondary, 35B45, 35J50,
35J35
\end{AMS}

\pagestyle{myheadings}

\section{Introduction}
In this paper, the authors are concerned with the development of an $L^p$- primal-dual weak Galerkin ($L^p$-PDWG) finite element method for second order elliptic boundary value problems that seek $u$ such that
\begin{equation}\label{model}
\begin{split}
- \Delta u + \nabla\cdot(\bbeta u) &=f, \quad \text{in} \ \Omega,\\
u&=g, \quad \text{on} \  \partial \Omega,
\end{split}
\end{equation}
where $\Omega\subset \mathbb R^d$($d = 2, 3$) is an open bounded and connected domain with piecewise smooth Lipschitz boundary $\partial \Omega$. By introducing the space $V=W_0^{1,p}(\Omega) \cap W^{2,p}(\Omega)$, we obtain the following weak form for the model problem (\ref{model}) following an use of the usual integration by parts: Find $u\in L^q(\Omega)$ such that
\begin{equation}\label{weakform}
 (u,  \Delta \sigma + \bbeta\cdot\nabla \sigma)= \langle g, \nabla \sigma\cdot\bn \rangle_{\partial \Omega} - (f, \sigma), \qquad \forall \sigma\in V.
\end{equation}
Assume the model problem \eqref{model} has one and only one solution for any given $f$ and $g$ in appropriate spaces.

The weak Galerkin  (WG) method  was first introduced by Wang and Ye in \cite{wang-ye-2014} for second order elliptic equations, where weak gradient and its discrete weak gradient were constructed to replace the standard gradient and its discrete gradient.  Later, the authors in \cite{WW-mathcomp} developed a primal-dual weak Galerkin (PDWG) finite element method for the second order elliptic problem in non-divergence form, where a {\it discrete weak Hessian} operator  in the weak formulation of the model PDEs was designed. This PDWG algorithm can be characterized as a constrained $L^2$ optimization problem with constraints given by the weak formulation of the model PDEs weakly defined on each element. In the past several years, many theoretical a priori error estimates for weak Galerkin methods have been established in $L^2$ and discrete $H^m, \ m=1,2$ norms.  Readers are referred to \cite{ww2017, ww2018,  wwhyperbolic,  wang-ye-2014,wang-ye-2015, w1, w2, w3, w4, w5, w6, w7} for an incomplete list of references. 

The purpose of this paper is to present an $L^p$- primal-dual weak Galerkin method for the problem \eqref{model}, and establish a general $L^q$ and $L^p$ theory for the numerical method. To our best knowledge, there is one existing result in the $L^p\ (1<p< \infty)$ error estimate for the mixed finite element method developed by Duran \cite{r1988} for second order elliptic problems in $\mathbb R^2$, but no results in $L^p$ are known for the weak Galerkin finite element methods in the literature. Different from the method in \cite{WW-mathcomp}, our numerical scheme is based on the weak formulation \eqref{weakform} together with a weak version of the dual operator applied to the test functions.  The new PDWG method can be characterized as a constrained $L^p$ optimization problem with constraints that satisfy the PDE weakly on each element, which extends the idea of $L^2$ minimization problem in \cite{WW-mathcomp} to a more general $L^p$ setting.

To conclude this section, we point out that our theory for the $L^p$ primal-dual weak Gelerkin finite element method is based on the assumption that the solution to the following adjoint problem
\begin{equation}\label{EQ:adjoint}
\begin{split}
-\Delta \varphi - \bbeta\cdot\nabla \varphi = &\ \chi,\qquad \mbox{ in } \Omega,\\
\varphi = & \ 0, \qquad \mbox{ on } \partial\Omega,
\end{split}
\end{equation}
is $W^{2,p}$-regular in the sense that it has a unique solution in $W^{1,p}_0(\Omega)\cap W^{2,p}(\Omega)$ and the solution satisfies
$$
\|\varphi\|_{2,p} \leq C \|\chi\|_{0,p}.
$$
Under this assumption, we shall derive an optimal order error estimate in the standard $L^q$ norm for the primal variable $u$ and the standard $W^{m,p}, 0\le m\le 1$ norms for the dual variable. Numerical experiments demonstrate that our error estimate for the primal variable is optimal; i.e., the error bound is sharp.

The rest of this paper is organized as follows. In Section \ref{Section:Hessian}, we briefly review the weak differential operators and their discrete versions. In Section \ref{Section:WGFEM}, the primal-dual weak Galerkin scheme is introduced for the model problem \eqref{model} based on $L^p$ theory. Section \ref{Section:EU} is devoted to the establishment of the solution existence, uniqueness and stability. In Section \ref{Section:EE}, we derive an  error equations for our numerical methods, which is  of essential importance in our later error estimates.
 Section \ref{Section:ES-1} and Section \ref{Section:ES-2} establish  error estimates for the primal variable in $L^q$ norms and for the dual variable in $W^{m,p}, 0\le m\le 1$ norms, respectively. Finally, a series of numerical examples are presented in Section \ref{Section:NR} to verify the mathematical convergence theory.

\section{Weak Differential Operators}\label{Section:Hessian}
The Laplacian and the gradient are the principle differential operators used in the weak formulation (\ref{weakform}) for the second order elliptic model problem (\ref{model}). This section gives a brief discussion of the weak Laplacian and gradient operators as well as their discrete analogies \cite{w2020}.

Let $T$ be a polygonal or polyhedral domain with boundary $\partial T$. A weak function on $T$ refers to a triplet $\sigma=\{\sigma_0,\sigma_b, \sigma_n\}$ such that $\sigma_0\in L^p(T)$, $\sigma_b\in L^{p}(\partial T)$, and $\sigma_n\in L^{p}(\partial T)$. The first and second components $\sigma_0$ and $\sigma_b$ can be identified as the value of $\sigma$ in the interior and on the boundary of $T$. The third component $\sigma_n$ is meant to represent the value of $\nabla \sigma \cdot \bn$ on the boundary of the element $T$. Note that $\sigma_b$ and $\sigma_n$ might be totally independent of the trace of $\sigma_0$ and $\nabla \sigma_0  \cdot \bn$ on $\partial T$, respectively. Denote by $\W(T)$ the space of all scalar-valued weak functions on $T$; i.e.,
\begin{equation}\label{2.1}
\W(T)=\{\sigma=\{\sigma_0,\sigma_b, \sigma_n\}: \sigma_0\in L^p(T), \sigma_b\in L^{p}(\partial T), \sigma_n\in L^{p}(\partial T)\}.
\end{equation}
The weak Laplacian operator, denoted by $\Delta_w$, is defined as a linear functional in $W^{2,q}(T)$ such that
\begin{equation*}
(\Delta_w \sigma, w)_T= (\sigma_0, \Delta w)_T-\langle \sigma_b,  \nabla w\cdot \textbf{n}\rangle_{\partial T}+ \langle \sigma_n, w\rangle_{\partial T},
\end{equation*}
for all $w \in  W^{2,q}(T)$.

Denote by $P_r(T)$ the space of all polynomials on $T$ with degree no more than $r$. A discrete analogy of $\Delta_w \sigma$ for $\sigma\in \W(T)$ is defined as the unique polynomial $\Delta_{w, r, T} \sigma \in P_r(T)$ satisfying
\begin{equation}\label{Loperator1-1}
(\Delta_{w, r, T} \sigma, w)_T= (\sigma_0, \Delta w)_T-\langle \sigma_b,  \nabla w\cdot \textbf{n}\rangle_{\partial T}+\langle \sigma_n, w\rangle_{\partial T}, \quad\forall w \in P_r(T).
\end{equation}
For smooth $\sigma_0$ such that $\sigma_0\in W^{2,p}(T)$, we have from the integration by parts
\begin{equation}\label{Loperator1-2}
(\Delta_{w, r, T} \sigma, w)_T= (\Delta \sigma_0, w)_T+\langle \sigma_0-\sigma_b,  \nabla w\cdot
\textbf{n}\rangle_{\partial T}- \langle \nabla \sigma_0\cdot \bn-\sigma_n, w \rangle_{\partial T},
\end{equation}
for all $w \in P_r(T)$. Similarly, the discrete weak gradient operator is defined as the unique polynomial $\nabla_{w, r, T} \sigma \in [P_r(T)]^d$ satisfying
\begin{equation}\label{Gradient-1}
(\nabla_{w, r, T} \sigma, \varphi)_T= -(\sigma_0, \nabla\cdot \varphi)_T+\langle \sigma_b,  \varphi\cdot \bn\rangle_{\partial T}, \quad\forall \varphi \in [P_r(T)]^d.
\end{equation}
When $\sigma_0\in W^{1,p}(T)$, the following identify holds true:
\begin{equation}\label{Gradient-2}
(\nabla_{w, r, T} \sigma, \varphi)_T= (\nabla \sigma_0, \varphi)_T+\langle \sigma_b-\sigma_0,  \varphi\cdot
\bn\rangle_{\partial T},
\end{equation}
for all $\varphi \in [P_r(T)]^d$.

\section{Numerical Algorithm}\label{Section:WGFEM}
Denote by ${\cal T}_h$ a partition of the domain $\Omega$ into
polygons in 2D or polyhedra in 3D which is shape regular in the sense described in \cite{wang-ye-2014}. Denote by ${\mathcal E}_h$ the
set of all edges or flat faces in ${\cal T}_h$ and  ${\mathcal
E}_h^0={\mathcal E}_h \setminus
\partial\Omega$ the set of all interior edges or flat faces.
Denote by $h_T$ the meshsize of $T\in {\cal T}_h$ and
$h=\max_{T\in {\cal T}_h}h_T$ the meshsize for the partition
${\cal T}_h$.

For any given integer $k\geq 1$, denote by
$W_k(T)$ the local discrete space of  the weak functions defined by
$$
W_k(T)=\{\{\sigma_0,\sigma_b, \sigma_n\}:\sigma_0\in P_k(T),\sigma_b\in
P_k(e), \sigma_n \in P_{k-1}(e),e\subset \partial T\}.
$$
Patching $W_k(T)$ over all the elements $T\in {\cal T}_h$
through a common value of $\sigma_b$ and $\sigma_n$ on the interior interface $\E_h^0$ yields a weak finite element space $W_h$:
\begin{equation}\label{wh}
W_h=\big\{\{\sigma_0, \sigma_b, \sigma_n\}:\{\sigma_0, \sigma_b,  \sigma_n\}|_T\in W_k(T), \forall T\in {\cal T}_h \big\}.
\end{equation}
Note that $\sigma_n$ has two values $\sigma_n^L$ and $\sigma_n^R$ on each interior interface $e=\partial T_L\cap \partial T_R \in {\cal E}_h^0$ as seen from the two elements $T_L$ and $T_R$, and they must satisfy $\sigma_n^L+\sigma_n^R=0$. Denote by $W_h^0$ the subspace of $W_h$ with homogeneous boundary condition; i.e.,
\begin{equation*}\label{wh0}
 W_h^0=\{\{\sigma_0, \sigma_b, \sigma_n\}:\{\sigma_0, \sigma_b,  \sigma_n\}|_T\in W_h,\sigma_b|_{\partial\Omega}=0,\ \forall e\in\partial T, T\in{\mathcal T}_h \}.
\end{equation*}
Denote by $M_h$ the finite element space consisting of piecewise polynomials of degree $s$ where $s=k-1$; i.e.,
\begin{equation}\label{mh}
M_h=\{w: w|_T\in P_{s}(T),  \forall T\in {\cal T}_h\}.
\end{equation}
{\em We emphasize that both the weak gradient and the weak Laplacian operators are defined by using piecewise polynomials of degree $s=k-1$.} For purely diffusive equations, one may assume the value of $s=k-2$.

For simplicity of notation and without confusion, for any $\sigma\in
W_h$, denote by $\Delta_{w} \sigma$ and $\nabla_w\sigma$ the discrete weak Laplacian $\Delta_{w, s, T} \sigma$  and discrete weak gradient $\nabla_{w, s, T} \sigma$  computed by (\ref{Loperator1-1}) and \eqref{Gradient-1} on each element $T$, respectively; i.e.,
 $$
(\Delta_{w} \sigma)|_T=\Delta_{w, s, T}(\sigma|_T),\quad (\nabla_{w} \sigma)|_T=\nabla_{w, s, T}(\sigma|_T),\ s=k-1.
$$

For any $\sigma, \lambda\in W_h$ and $u\in M_h$, we introduce the following forms
\begin{align} \label{EQ:local-stabilizer}
s(\lambda, \sigma)=&\sum_{T\in {\cal T}_h}s_T(\lambda, \sigma),
\\
b(u, \lambda)=&\sum_{T\in {\cal T}_h}b_T(u, \lambda),  \label{EQ:local-bterm}
\end{align}
where
\begin{equation*}
\begin{split}
s_T(\lambda, \sigma)=&h_T^{1-2p}\int_{\partial T} |\lambda_0-\lambda_b|^{p-1}sgn(\lambda_0-\lambda_b)(\sigma_0-\sigma_b)ds\\
&+  h_T^{1-p}\int_{\partial T}  |\nabla \lambda_0 \cdot \bn-\lambda_n|^{p-1}sgn (\nabla \lambda_0 \cdot \bn-\lambda_n) (\nabla \sigma_0 \cdot \bn-\sigma_n)ds\\
b_T(u, \lambda)=&(u, - \bbeta\cdot\nabla_w\lambda - \Delta_w \lambda)_T.
\end{split}
\end{equation*}

The numerical scheme for the second order elliptic model problem (\ref{model}) based on the variational formulation (\ref{weakform}) can be stated as follows:
\begin{algorithm}
Find $(u_h;\lambda_h)\in M_h \times W_{h}^0$, such that
\begin{eqnarray}\label{32}
s(\lambda_h, \sigma)+b(u_h, \sigma)&=& (f, \sigma_0)- \langle g, \sigma_n \rangle_{\partial \Omega}, \qquad \forall \sigma\in W_{h}^0,\\
b(v, \lambda_h)&=&0,\qquad \qquad \qquad   \qquad \qquad \forall v\in M_h.\label{2}
\end{eqnarray}
\end{algorithm}

In next section, we shall study the solution existence and uniqueness for the primal-dual weak Galerkin finite element algorithm (\ref{32})-(\ref{2}). For simplicity of analysis, we assume constant value for the convection term $\bbeta$
 on each element $T\in \T_h$ in the rest of the paper.

\section{Solution Existence and Uniqueness}\label{Section:EU}
Denote by $Q_0$ the $L^2$ projection operator onto $P_k(T)$ for each element $T$. For each edge or face $e\subset\partial T$, denote by $Q_b$ and $Q_n$ the $L^2$ projection operators onto $P_{k}(e)$ and $P_{k-1}(e)$, respectively. For any $w\in W^{2,p}(\Omega)$, define
by $Q_h w$ the $L^2$  projection onto the weak finite element
space $W_h$ such that on each element $T$,
$$
Q_hw=\{Q_0w,Q_bw, Q_n(\nabla w \cdot \bn)\}.
$$
Denote by ${\cal Q}_h$ the $L^2$ projection onto the finite element space $M_h$.

\begin{lemma}\label{Lemma5.1} \cite{w2020} The $L^2$ projection operators $Q_h$ and ${\cal Q}_h$ satisfy the following commutative properties:
\begin{eqnarray}\label{l}
\Delta_{w}(Q_h w) &=& {\cal Q}_h (\Delta w), \qquad  w\in W^{2,p}(T),\\
\nabla_{w}(Q_h v) &=& {\cal Q}_h (\nabla v), \qquad  v\in W^{1,p}(T).\label{l:2}
\end{eqnarray}
\end{lemma}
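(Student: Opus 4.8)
The plan is to establish both commutative identities locally on each element $T\in\mathcal{T}_h$, since all of the operators involved ($\Delta_w$, $\nabla_w$, and the $L^2$ projections) are defined element by element. Because $\Delta_w(Q_hw)$ and $\mathcal{Q}_h(\Delta w)$ both live in $P_s(T)$ with $s=k-1$, it suffices to show that they produce the same $L^2$ inner product against every test polynomial $\phi\in P_s(T)$; the same strategy applies to the gradient identity, tested against $\boldsymbol{\varphi}\in[P_s(T)]^d$. The two ingredients I will use are the defining relations \eqref{Loperator1-1} and \eqref{Gradient-1} of the discrete weak operators, together with the defining property of the $L^2$ projections $Q_0,Q_b,Q_n$ and $\mathcal{Q}_h$.

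For the weak Laplacian \eqref{l}, I would start from \eqref{Loperator1-1} applied to $Q_hw=\{Q_0w,Q_bw,Q_n(\nabla w\cdot\bn)\}$, giving
\begin{equation*}
(\Delta_{w}(Q_hw),\phi)_T=(Q_0w,\Delta\phi)_T-\langle Q_bw,\nabla\phi\cdot\bn\rangle_{\partial T}+\langle Q_n(\nabla w\cdot\bn),\phi\rangle_{\partial T}.
\end{equation*}
The next step is to remove the projections by a degree count: since $\phi\in P_{k-1}(T)$, we have $\Delta\phi\in P_{k-3}(T)\subset P_k(T)$, $\nabla\phi\cdot\bn\in P_{k-2}(e)\subset P_k(e)$, and $\phi|_e\in P_{k-1}(e)$, so the defining orthogonality of $Q_0$, $Q_b$ and $Q_n$ (onto $P_k(T)$, $P_k(e)$, and $P_{k-1}(e)$ respectively) lets me replace $Q_0w$, $Q_bw$, $Q_n(\nabla w\cdot\bn)$ by $w$, $w$, $\nabla w\cdot\bn$ with no error. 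Applying the second Green's identity then cancels all four boundary terms and yields $(\Delta_{w}(Q_hw),\phi)_T=(\Delta w,\phi)_T$. Finally, since $\phi\in P_s(T)=M_h|_T$, this right-hand side equals $(\mathcal{Q}_h(\Delta w),\phi)_T$, which closes the argument.

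The gradient identity \eqref{l:2} follows the identical template: starting from \eqref{Gradient-1} for $\nabla_w(Q_hv)$ tested against $\boldsymbol{\varphi}\in[P_{k-1}(T)]^d$, I use $\nabla\cdot\boldsymbol{\varphi}\in P_{k-2}(T)\subset P_k(T)$ and $\boldsymbol{\varphi}\cdot\bn|_e\in P_{k-1}(e)\subset P_k(e)$ to strip the projections $Q_0$ and $Q_b$, integrate by parts once to cancel the single pair of boundary terms, arrive at $(\nabla_w(Q_hv),\boldsymbol{\varphi})_T=(\nabla v,\boldsymbol{\varphi})_T$, and identify this with $(\mathcal{Q}_h(\nabla v),\boldsymbol{\varphi})_T$.

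The computation is entirely routine once the framework is set up; the only point requiring genuine care—and the step I would check most carefully—is the degree bookkeeping that permits dropping each $L^2$ projection. In particular, the boundary term involving $Q_n$ is the tight one: the test trace $\phi|_e$ has degree exactly $k-1$, which matches the range $P_{k-1}(e)$ of $Q_n$, so the choice $s=k-1$ paired with $\sigma_n\in P_{k-1}(e)$ is precisely what makes the identity hold without a projection error. Once these inclusions are verified, the cancellation of boundary terms under Green's identities is automatic.
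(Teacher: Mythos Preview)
Your proof is correct. The paper does not actually supply a proof of this lemma; it merely cites \cite{w2020}, so there is nothing to compare against. Your argument---testing against an arbitrary $\phi\in P_{k-1}(T)$ (respectively $\boldsymbol{\varphi}\in[P_{k-1}(T)]^d$), stripping the projections via the degree inclusions, and collapsing the result with Green's identity---is exactly the standard route for such commutative diagrams, and the degree bookkeeping you flag (especially the tight match $\phi|_e\in P_{k-1}(e)$ against $Q_n$) is handled accurately.
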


To show the existence of solutions, we consider the functional
\[
     J(\sigma,v) : = \frac{1}{p}s(\sigma,\sigma) + b(v, \sigma) - (F,\sigma),
 \]
where $(F,\sigma)=(f, \sigma_0)- \langle g, \sigma_n  \rangle_{\partial \Omega}$ and $p>1$. If $(u_h;\lambda_h)\in M_h \times W_{h}^0$ is the solution of \eqref{32}-\eqref{2}, then we have from \eqref{2}
  \[
      J(\lambda_h,v)=\frac{1}{p} s(\lambda_h,\lambda_h)  - (F,\lambda_h)= J(\lambda_h,u_h),\ \ \forall v\in M_h.
  \]
On the other hand, the equation \eqref{32} indicates that $\partial_{\sigma} J(\lambda_h, u_h)(\sigma)=0$ for all $\sigma\in W_h^0$, where $\partial_{\sigma} J(\lambda_h, u_h)(\sigma)$ is the Gateaux partial derivative at $\lambda_h$ in the direction of $\sigma$. It follows that $\lambda_h$ is a  global  minimizer of the functional $\sigma \rightarrow J(\sigma, u_h)$; i.e.,
$$
J(\lambda_h, u_h)\leq J(\sigma,u_h),\qquad \forall \sigma\in W^0_h.
$$
Consequently,
\begin{equation}\label{EQ:saddle-point}
J(\lambda_h,v)\le J(\lambda_h,u_h)\le J(\sigma,u_h), \qquad \forall v\in M_h, \sigma\in W^0_h.
\end{equation}
The above inequality implies  that the solution $(u_h; \lambda_h)$ is a saddle point of the functional $J(\cdot,\cdot)$. Thus, \eqref{32}-\eqref{2} can be formulated as the following min-max problem: Find $u_h\in M_h$ and $\lambda_h
\in W_{h}^0$ such that
$$
( \lambda_h, u_h) = \arg\min_{\sigma\in W^0_h}\max_{v\in M_h} J(\sigma,v).
$$
As a convex minimization problem, the above problem has a solution so that there must be a solution $(u_h;\lambda_h)$ satisfying  \eqref{32}-\eqref{2}.

The rest of this section is devoted to a discussion of the uniqueness of the numerical solution $(u_h; \lambda_h)$.

\begin{theorem}
The numerical scheme (\ref{32})-(\ref{2}) has one and only one solution $(u_h;\lambda_h)$ in the finite element space $M_h \times W_{h}^0$.
\end{theorem}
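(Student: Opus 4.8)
The existence half has already been settled in the discussion preceding the statement, where the saddle-point characterization \eqref{EQ:saddle-point} exhibits a solution as the min-max point of a convex functional; so I would devote the proof to uniqueness. The essential difficulty is that the stabilizer $s(\cdot,\cdot)$ is \emph{nonlinear} in its first slot whenever $p\neq 2$, so the usual linear-algebra reasoning (``the homogeneous system admits only the trivial solution'') is unavailable. Instead I would take two solutions $(u_h^{(1)};\lambda_h^{(1)})$ and $(u_h^{(2)};\lambda_h^{(2)})$, subtract the pairs of equations \eqref{32}--\eqref{2}, and test the difference of \eqref{32} with $\sigma=\lambda_h^{(1)}-\lambda_h^{(2)}$. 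Invoking \eqref{2} with $v=u_h^{(1)}-u_h^{(2)}$ annihilates the $b$-contribution, leaving $s(\lambda_h^{(1)},\lambda_h^{(1)}-\lambda_h^{(2)})-s(\lambda_h^{(2)},\lambda_h^{(1)}-\lambda_h^{(2)})=0$.

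The crux is a monotonicity argument replacing bilinearity. Every term of $s$ has the structure $|t|^{p-2}t$ evaluated at $t=\lambda_0-\lambda_b$ (respectively $t=\nabla\lambda_0\cdot\bn-\lambda_n$), and for $p>1$ the scalar map $t\mapsto|t|^{p-2}t$ is strictly monotone: $(|a|^{p-2}a-|b|^{p-2}b)(a-b)\ge 0$ with equality only if $a=b$. Applying this pointwise on each $\partial T$ expresses the displayed difference as a sum of nonnegative quantities; since it vanishes, I conclude for $\delta:=\lambda_h^{(1)}-\lambda_h^{(2)}$ that $\delta_0-\delta_b=0$ and $\nabla\delta_0\cdot\bn-\delta_n=0$ on every $\partial T$, i.e. $s(\delta,\delta)=0$.

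With these ``no-jump'' identities in hand, \eqref{Loperator1-2} and \eqref{Gradient-2} collapse the discrete weak operators of $\delta$ to $\Delta_w\delta=\Delta\delta_0$ and $\nabla_w\delta=\nabla\delta_0$ elementwise (the projections onto $P_s$ are exact since $\delta_0\in P_k(T)$). Testing the difference of \eqref{2} with $v=-\bbeta\cdot\nabla_w\delta-\Delta_w\delta\in M_h$ then forces $-\bbeta\cdot\nabla\delta_0-\Delta\delta_0=0$ on each $T$. Because $\delta_b$ is single-valued and the two values of $\delta_n$ cancel across interior faces, the no-jump relations propagate into continuity of $\delta_0$ and of its normal derivative across faces, so $\delta_0$ is globally $W^{2,p}$-conforming; together with the homogeneous boundary data built into $W_h^0$ it solves the homogeneous version of the adjoint problem \eqref{EQ:adjoint}. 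The assumed uniqueness for \eqref{EQ:adjoint} yields $\delta_0=0$, whence $\delta_b=\delta_n=0$ and $\lambda_h^{(1)}=\lambda_h^{(2)}$.

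Finally, with $\lambda_h^{(1)}=\lambda_h^{(2)}$ the difference of \eqref{32} reduces to $b(u_h^{(1)}-u_h^{(2)},\sigma)=0$ for all $\sigma\in W_h^0$, and I would recover $u_h^{(1)}=u_h^{(2)}$ by a duality construction. Setting $e_u:=u_h^{(1)}-u_h^{(2)}\in M_h$, solve \eqref{EQ:adjoint} with datum $\chi=e_u$ to get $\Phi\in W_0^{1,p}(\Omega)\cap W^{2,p}(\Omega)$, and take $\sigma=Q_h\Phi\in W_h^0$ (admissible since $\Phi=0$ on $\partial\Omega$). The commutativity of Lemma \ref{Lemma5.1} gives $\Delta_w\sigma={\cal Q}_h(\Delta\Phi)$ and $\nabla_w\sigma={\cal Q}_h(\nabla\Phi)$, so using $e_u\in M_h$ and the elementwise-constant $\bbeta$ one finds $b(e_u,\sigma)=(e_u,-\bbeta\cdot\nabla\Phi-\Delta\Phi)=(e_u,e_u)=\|e_u\|_{0,2}^2$; the vanishing of $b(e_u,\sigma)$ then gives $e_u=0$. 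The main obstacle is the first, nonlinear step: absent the strict monotonicity of $|t|^{p-2}t$ there is no way to turn the subtracted equations into a sign-definite quantity, and it is this inequality, not bilinearity, that powers the argument. The concluding surjectivity step is comparatively routine but relies squarely on the assumed $W^{2,p}$-regularity of \eqref{EQ:adjoint}.
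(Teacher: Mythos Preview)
Your argument is correct and in two places cleaner than the paper's. For the uniqueness of $\lambda_h$ you invoke the strict monotonicity $(|a|^{p-2}a-|b|^{p-2}b)(a-b)\ge 0$ directly, which yields the no-jump identities in one stroke; the paper instead shows $s(\lambda_h^{(1)},\lambda_h^{(1)})=s(\lambda_h^{(2)},\lambda_h^{(2)})$ via Young's inequality and then squeezes the midpoint $s(\tfrac{\lambda_h^{(1)}+\lambda_h^{(2)}}{2},\tfrac{\lambda_h^{(1)}+\lambda_h^{(2)}}{2})$ between these using strict convexity of $t\mapsto|t|^p$, reaching the same conclusion by a longer route. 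For the uniqueness of $u_h$ you use a duality construction: solve \eqref{EQ:adjoint} with right-hand side $e_u$, test \eqref{32} with $Q_h\Phi$, and invoke the commutativity Lemma \ref{Lemma5.1} to obtain $\|e_u\|_{0,2}^2=0$. The paper instead expands $b(e_h,\sigma)$ by integration by parts and chooses $\sigma_0,\sigma_b,\sigma_n$ to be the jump quantities themselves, obtaining $-\Delta e_h+\nabla\cdot(\bbeta e_h)=0$ on each $T$ together with the interface conditions $\ljump e_h\rjump=0$ and $\ljump\nabla e_h-\bbeta e_h\rjump\cdot\bn_e=0$, then appeals to uniqueness for the \emph{primal} problem \eqref{model}. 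Your route is shorter but draws on the $W^{2,p}$-regularity of the adjoint \eqref{EQ:adjoint}; the paper's route avoids regularity altogether and rests only on the standing uniqueness assumption for \eqref{model}. Both are legitimate under the paper's hypotheses.
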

\begin{proof}
Let $(u^{(1)}_h;\lambda^{(1)}_h)$ and $(u^{(2)}_h;\lambda^{(2)}_h)$ be two solutions of (\ref{32})-(\ref{2}).  Denote
\[
    \epsilon_h=\lambda^{(1)}_h-\lambda^{(2)}_h=\{\epsilon_0,\epsilon_b,\epsilon_n\},\ \ e_h=u^{(1)}_h-u^{(2)}_h.
\]
    For any constants $\theta_1,\theta_2$,
  we choose $\sigma=\theta_1\lambda^{(1)}_h+\theta_2\lambda^{(2)}_h$ in \eqref{32} and use \eqref{2} to obtain
\[
     s(\lambda^{(1)}_h, \theta_1\lambda^{(1)}_h+\theta_2\lambda^{(2)}_h)-s(\lambda^{(2)}_h, \theta_1\lambda^{(1)}_h+\theta_2\lambda^{(2)}_h)=0.
\]
In particular,  by taking $(\theta_1,\theta_2)=(1,0), (0,1)$, we have
  \begin{equation}\label{33}
     s(\lambda^{(1)}_h, \lambda^{(1)}_h)=s(\lambda^{(2)}_h,\lambda^{(1)}_h),\ \  s(\lambda^{(2)}_h, \lambda^{(2)}_h)=s(\lambda^{(1)}_h,\lambda^{(2)}_h),
\end{equation}
   which yields, together with Young's inequality  $|AB|\le \frac{|A|^p}{p}+\frac{|B|^q}{q}$, that  \[
   s(\lambda^{(1)}_h, \lambda^{(1)}_h)\le \frac{s(\lambda^{(2)}_h,\lambda^{(2)}_h)}{q}+\frac{s(\lambda^{(1)}_h,\lambda^{(1)}_h)}{p},\ \
    s(\lambda^{(2)}_h, \lambda^{(2)}_h)\le \frac{ s(\lambda^{(1)}_h,\lambda^{(1)}_h)}{q}+\frac{ s(\lambda^{(2)}_h,\lambda^{(2)}_h)}{p},
\]
which yields
\begin{equation}\label{35}
    s(\lambda^{(1)}_h, \lambda^{(1)}_h)=s(\lambda^{(2)}_h, \lambda^{(2)}_h).
\end{equation}
On the other hand,  for any two real numbers $A,B$,  there holds
  \[
   \left|\frac{A+B}{2}\right|^p\le (|A|^p+|B|^p)/2,
 \]
and the equality holds true if and only if $A=B$.  It follows that
 \begin{equation}\label{34}
     s(\frac{\lambda^{(1)}_h+\lambda^{(2)}_h}{2}, \frac{\lambda^{(1)}_h+\lambda^{(2)}_h}{2}) \le  \frac{1}{2} \big( s( \lambda^{(1)}_h, \lambda^{(1)}_h)+ s( \lambda^{(2)}_h, \lambda^{(2)}_h)\big)=s( \lambda^{(1)}_h, \lambda^{(1)}_h).
 \end{equation}
      By \eqref{33}-\eqref{35} and Young's inequality,
\begin{eqnarray*}
    s( \lambda^{(1)}_h, \lambda^{(1)}_h)
     &= &\frac{1}{2}\big ( s( \lambda^{(1)}_h, \lambda^{(1)}_h)+ s( \lambda^{(1)}_h, \lambda^{(2)}_h)\big)
=s( \lambda^{(1)}_h, \frac{ \lambda^{(1)}_h+\lambda^{(2)}_h}{2}) \\
  & \le & \frac 1q{s(\lambda^{(1)}_h,\lambda^{(1)}_h)} +\frac 1p {s(\frac{\lambda^{(1)}_h+\lambda^{(2)}_h}{2}, \frac{\lambda^{(1)}_h+\lambda^{(2)}_h}{2})},
\end{eqnarray*}
  which indicates that
 \[
     s( \lambda^{(1)}_h, \lambda^{(1)}_h)\le {s(\frac{\lambda^{(1)}_h+\lambda^{(2)}_h}{2}, \frac{\lambda^{(1)}_h+\lambda^{(2)}_h}{2})}.
 \]
   In light of \eqref{34}, we easily obtain that
 \[
    s(\frac{\lambda^{(1)}_h+\lambda^{(2)}_h}{2}, \frac{\lambda^{(1)}_h+\lambda^{(2)}_h}{2}) =s( \lambda^{(1)}_h, \lambda^{(1)}_h)=s( \lambda^{(2)}_h, \lambda^{(2)}_h).
 \]
    The above equality holds true if and only if
   \begin{eqnarray*}
       \lambda_0^{(1)}-\lambda_b^{(1)} &=& \lambda_0^{(2)}-\lambda_b^{(2)}, \mbox{ on } \pT,\\
\nabla \lambda^{(1)}_0\cdot{\bf n}-\lambda_n^{(1)}&=&\nabla \lambda^{(2)}_0\cdot{\bf n}-\lambda_n^{(2)}, \mbox{ on } \pT,
 \end{eqnarray*}
    or equivalently,
\begin{eqnarray}\label{e_0}
       \epsilon_0 &=& \epsilon_b, \mbox{ on } \pT,\\
\nabla \epsilon_0\cdot{\bf n}&=&\epsilon_n, \mbox{ on } \pT.\label{e_1}
 \end{eqnarray}
Let $(u^{(1)}_h;\lambda^{(1)}_h)$ and $(u^{(2)}_h;\lambda^{(2)}_h)$ be two solutions of (\ref{32})-(\ref{2}). We have from \eqref{2} that
 $b(v, \epsilon_h)=0$. Using \eqref{Loperator1-2} and \eqref{Gradient-2}, we have
 \begin{equation*}
 \begin{split}
 0=&b(v, \epsilon_h)\\
 =& \sum_{T\in {\cal T}_h} (v, -\bbeta\cdot \nabla_w \epsilon_h-\Delta_w \epsilon_h)_T\\
  =& \sum_{T\in {\cal T}_h} -(\nabla\epsilon_0, \bbeta v)_T-\langle \epsilon_b-\epsilon_0, \bbeta v\cdot\bn\rangle_{\partial T}\\
  &-(\Delta \epsilon_0, v)-\langle \epsilon_0-\epsilon_b, \nabla v \cdot\bn\rangle_{\partial T}+\langle \nabla\epsilon_0\cdot \bn-\epsilon_n, v\rangle_{\partial T}\\
  =& \sum_{T\in {\cal T}_h}  (-\bbeta\cdot\nabla\epsilon_0-\Delta \epsilon_0, v)_T,
 \end{split}
 \end{equation*}
 where we used  \eqref{e_0} -\eqref{e_1}, which gives
 $-\bbeta\cdot\nabla\epsilon_0-\Delta \epsilon_0=0$ on each $T\in {\cal T}_h$. Together with \eqref{e_0} -\eqref{e_1}, we arrive at $-\bbeta\cdot\nabla\epsilon_0-\Delta \epsilon_0=0$  in $\Omega$, with the boundary condition $\epsilon_0=0$ on $\partial \Omega$ due to the fact that \eqref{e_0}  and $\epsilon_h\in W_h^0$. Therefore, we obtain $\epsilon_0=0$ in $\Omega$. Furthermore,
  we have $\epsilon_b=0$ and $\epsilon_n=0$,
 which leads to $\lambda^{(1)}_h=\lambda^{(2)}_h$.

We next show $e_h=0$.  To this end, using  $\lambda^{(1)}_h=\lambda^{(2)}_h $ and the equation \eqref{32} we obtain
$$
b(e_h,\sigma)=s(\lambda^{(1)}_h,\sigma)- s(\lambda^{(2)}_h,\sigma) +b(e_h,\sigma)=0, \quad \forall\sigma\in W_h^0,
$$
which, together with the definition of the weak Laplacian $\Delta_w$ and the weak gradient $\nabla_w$, yields
\begin{eqnarray*}
  0=  b(e_h,\sigma)&=& \sum_{T\in\T_h}(e_h,  - \bbeta\cdot\nabla_w\sigma - \Delta_w\sigma)_T\\
& = & \sum_{T\in\T_h} ( \nabla\cdot(\bbeta e_h) - \Delta e_h, \sigma_0)_T \\
& & + \sum_{T\in\T_h} \langle\sigma_b,\nabla e_h\cdot\bn\rangle_\pT-\langle\sigma_n,e_h\rangle_\pT
-\langle e_h, \bbeta\cdot\bn \sigma_b\rangle_\pT\\
 &=&\sum_{T\in\T_h} (\nabla\cdot(\bbeta e_h) - \Delta e_h, \sigma_0)_T \\
&&+\sum_{e\in{\cal E}_h}\int_{e} \left(\ljump\nabla e_h-\bbeta e_h\rjump\cdot\bn_e \sigma_b- \ljump e_h\rjump\sigma_n\right)ds
\end{eqnarray*}
for all $\sigma\in W_h^0$, where $\bn_e$ is the assigned outward normal direction to $e\in {\cal E}_h$ and $\ljump\cdot\rjump$ is the jump on the edge $e\in {\cal E}_h$.  In particular, by taking $\sigma_0=\nabla\cdot(\bbeta e_h) - \Delta e_h$, $\sigma_n|_{{\cal E}_h}=-\ljump e_h\rjump$, and $\sigma_b|_{{\cal E}_h^0}=\ljump\nabla e_h-\bbeta e_h\rjump \cdot\bn_e$  we obtain on each $T\in\T_h$
\begin{eqnarray*}
   &&-\triangle e_h + \nabla\cdot(\bbeta e_h) =0,  \mbox{ in } T,\\
&&  \ljump e_h\rjump =0,\ \ \ljump\nabla e_h-\bbeta e_h\rjump\cdot\bn_e=0,  \mbox{ on } \pT.
\end{eqnarray*}
  Consequently, from the solution uniqueness for \eqref{model} we have
\[
   e_h\equiv 0,\ \mbox{ or equivalently } \ u_h^{(1)}=u_h^{(2)}.
\]
 This completes the proof.
\end{proof}

\section{Error Equation}\label{Section:EE}
Let $u$ and $(u_h;\lambda_h) \in M_h\times W_h^0$ be the exact solution of \eqref{model} and its numerical solution arising from the PDWG scheme (\ref{32})-(\ref{2}), respectively. Note that the exact solution of the Lagrangian multiplier $\lambda$ is $0$. Denote two error functions by
\begin{align}\label{error}
e_h&= {\cal Q}_hu-u_h,
\\
\epsilon_h&=  Q_h\lambda-\lambda_h=-\lambda_h.\label{error-2}
\end{align}

\begin{lemma}\label{errorequa}
Let $u$ and $(u_h;\lambda_h) \in M_h\times W_h^0$ be the exact solution of \eqref{model} and its numerical solution arising from  PDWG scheme (\ref{32})-(\ref{2}). The error functions $e_h$ and $\epsilon_h$ satisfy the following equations:
\begin{eqnarray}\label{sehv}
s(\epsilon_h, \sigma)+b(e_h, \sigma)&=& l_u(\sigma), \qquad \forall \sigma\in W_{h}^0,\\
b(v, \epsilon_h)&=&0,\qquad\qquad \forall  v\in M_h. \label{sehv2}
\end{eqnarray}
\end{lemma}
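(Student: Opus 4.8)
The plan is to verify the two error equations separately, obtaining the first one by subtracting the numerical scheme \eqref{32} from a consistency identity satisfied by the projection ${\cal Q}_h u$ of the exact solution, and reading off the functional $l_u(\sigma)$ along the way. I would first dispose of \eqref{sehv2}, which is immediate: since the exact multiplier is $\lambda\equiv 0$ we have $\epsilon_h=Q_h\lambda-\lambda_h=-\lambda_h$, and because $b(\cdot,\cdot)$ is bilinear (both $\nabla_w$ and $\Delta_w$ being linear) we get $b(v,\epsilon_h)=-b(v,\lambda_h)=0$ for every $v\in M_h$ directly from \eqref{2}.

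For \eqref{sehv} I would first exploit the algebraic structure of the stabilizer. Each integrand in $s_T(\lambda,\sigma)$ is built from the odd, $(p-1)$-homogeneous factors $|\lambda_0-\lambda_b|^{p-1}\mathrm{sgn}(\lambda_0-\lambda_b)$ and $|\nabla\lambda_0\cdot\bn-\lambda_n|^{p-1}\mathrm{sgn}(\nabla\lambda_0\cdot\bn-\lambda_n)$, so replacing $\lambda$ by $-\lambda$ flips the overall sign; hence $s(\epsilon_h,\sigma)=s(-\lambda_h,\sigma)=-s(\lambda_h,\sigma)$. Solving \eqref{32} for $s(\lambda_h,\sigma)$ and using the bilinearity $b(e_h,\sigma)=b({\cal Q}_h u,\sigma)-b(u_h,\sigma)$, the term $b(u_h,\sigma)$ cancels and one is left with $s(\epsilon_h,\sigma)+b(e_h,\sigma)=b({\cal Q}_h u,\sigma)-(f,\sigma_0)+\langle g,\sigma_n\rangle_{\partial\Omega}$. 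This identity is the source of the right-hand side, so $l_u(\sigma)$ is precisely $b({\cal Q}_h u,\sigma)-(f,\sigma_0)+\langle g,\sigma_n\rangle_{\partial\Omega}$; the remaining work is to recast it in a form suitable for the later estimates.

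The heart of the argument is therefore the evaluation of $b({\cal Q}_h u,\sigma)$. Since $\nabla_w\sigma\in[P_s(T)]^d$, $\Delta_w\sigma\in P_s(T)$, $\bbeta$ is piecewise constant, and ${\cal Q}_h$ is the $L^2$ projection onto $M_h=P_s$, I may replace ${\cal Q}_h u$ by $u$ inside each elementwise pairing, giving $b({\cal Q}_h u,\sigma)=\sum_{T\in\T_h}(u,-\bbeta\cdot\nabla_w\sigma-\Delta_w\sigma)_T$. I would then invoke the identities \eqref{Loperator1-2} and \eqref{Gradient-2} (legitimate since $\sigma_0$ is polynomial) to transfer the differentiation onto $\sigma_0$, use the projection property once more to turn $(\Delta\sigma_0,{\cal Q}_h u)_T$ and $(\bbeta\cdot\nabla\sigma_0,{\cal Q}_h u)_T$ back into pairings against $u$, and integrate by parts elementwise so the derivatives land on the exact solution. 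The volume contribution then collapses, via the strong form $-\Delta u+\nabla\cdot(\bbeta u)=f$, to $\sum_T(f,\sigma_0)_T=(f,\sigma_0)$, which cancels the $-(f,\sigma_0)$ in $l_u$, while a collection of face integrals involving the projection errors $u-{\cal Q}_h u$ and $(\nabla u-\nabla{\cal Q}_h u)\cdot\bn$ remains.

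The main obstacle is the bookkeeping of these residual face integrals. I would sum the elementwise boundary terms over $\T_h$, use the single-valuedness of $\sigma_b$ and the conservation convention $\sigma_n^L+\sigma_n^R=0$ to convert element-boundary integrals on ${\cal E}_h^0$ into jump integrals, impose $\sigma_b|_{\partial\Omega}=0$ since $\sigma\in W_h^0$, and employ the boundary data $u=g$ together with the approximation properties of the $L^2$ projections to combine the $\partial\Omega$ contribution with the explicit $+\langle g,\sigma_n\rangle_{\partial\Omega}$ term. The delicate point is to track which projection ($Q_0$ versus ${\cal Q}_h$, and the facewise $Q_b,Q_n$) acts on each trace and to confirm that all non-residual pieces cancel, leaving $l_u(\sigma)$ as a sum of projection-error face functionals that can be controlled in the subsequent error analysis.
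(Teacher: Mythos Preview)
Your approach is essentially the paper's: obtain \eqref{sehv2} directly from \eqref{2} and $\epsilon_h=-\lambda_h$, then evaluate $b({\cal Q}_h u,\sigma)$ via the identities \eqref{Loperator1-2}--\eqref{Gradient-2} with polynomial test ${\cal Q}_h u$, use the projection property to pass between ${\cal Q}_h u$ and $u$ in the volume integrals, integrate by parts, invoke the PDE, and subtract \eqref{32}. Your explicit oddness argument $s(-\lambda_h,\sigma)=-s(\lambda_h,\sigma)$ is actually a helpful clarification of the paper's informal ``subtraction'' of the nonlinear stabilizer; two small clean-ups: apply \eqref{Loperator1-2}--\eqref{Gradient-2} with test ${\cal Q}_h u$ \emph{before} replacing it by $u$ (the identities require a polynomial test, not merely polynomial $\sigma_0$), and note that only ${\cal Q}_h$ enters $l_u(\sigma)$, so there is no need to track $Q_0,Q_b,Q_n$.
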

Here
 \begin{equation}\label{lu}
\begin{split}
\qquad l_u(\sigma)=&\sum_{T\in {\cal T}_h} \langle u-{\cal Q}_h u, \sigma_n-\nabla \sigma_0\cdot\bn\rangle_{\partial T}+\langle \nabla  u\cdot\bn-\nabla {\cal Q}_h  u\cdot\bn, \sigma_0-\sigma_b\rangle_{\partial T}\\
&+ \sum_{T\in {\cal T}_h}\langle (u-{\cal Q}_hu) \bbeta\cdot\bn, \sigma_b-\sigma_0\rangle_\pT.
\end{split}
\end{equation}

\begin{proof}  First, from (\ref{error-2}) and (\ref{2}) we may readily derive (\ref{sehv2}). Next, by using \eqref{EQ:local-bterm} for $b(\cdot,\cdot)$ and choosing $w={\cal Q}_hu$ in (\ref{Loperator1-2}) and \eqref{Gradient-2}, we have
\begin{equation}\label{err}
\begin{split}
&b({\cal Q}_hu, \sigma)\\
=&\sum_{T\in {\cal T}_h}({\cal Q}_hu,  - \bbeta\cdot\nabla_w\sigma-\Delta_w\sigma)_T\\
=& \sum_{T\in {\cal T}_h} ({\cal Q}_h u, -\bbeta\cdot\nabla\sigma_0-\Delta \sigma_0)_T
-\langle {\cal Q}_hu \bbeta\cdot\bn, \sigma_b-\sigma_0\rangle_\pT \\
&+\langle \nabla {\cal Q}_hu\cdot\bn, \sigma_b-\sigma_0\rangle_{\partial T}+\langle {\cal Q}_hu, \nabla \sigma_0\cdot\bn - \sigma_n\rangle_{\partial T}\\
=& \sum_{T\in {\cal T}_h} (u, -\bbeta\cdot\nabla\sigma_0-\Delta \sigma_0)_T
-\langle {\cal Q}_hu \bbeta\cdot\bn, \sigma_b-\sigma_0\rangle_\pT \\
&+\langle \nabla {\cal Q}_hu\cdot\bn, \sigma_b-\sigma_0\rangle_{\partial T}+\langle {\cal Q}_hu,\nabla \sigma_0\cdot\bn - \sigma_n\rangle_{\partial T}.
\end{split}
\end{equation}
Now applying the usual integration by parts to the integrals on $T$ yields
\begin{equation}\label{err:02}
\begin{split}
&b({\cal Q}_hu, \sigma)\\
=& \sum_{T\in {\cal T}_h} (-\Delta u+\nabla\cdot(\bbeta u),  \sigma_0)_T + \langle (u-{\cal Q}_hu) \bbeta\cdot\bn, \sigma_b-\sigma_0\rangle_\pT \\
&+\langle \nabla ({\cal Q}_hu - u)\cdot\bn, \sigma_b-\sigma_0\rangle_{\partial T}+\langle ({\cal Q}_hu-u),(\nabla \sigma_0\cdot\bn - \sigma_n\rangle_{\partial T} - \langle g, \sigma_n\rangle_{\partial \Omega}\\
=& \ (f,  \sigma_0)_T - \langle g, \sigma_n\rangle_{\partial \Omega}\\
& + \sum_{T\in {\cal T}_h}\langle \nabla ({\cal Q}_hu - u)\cdot\bn, \sigma_b-\sigma_0\rangle_{\partial T}+\langle ({\cal Q}_hu-u),(\nabla \sigma_0\cdot\bn - \sigma_n\rangle_{\partial T}\\
& + \sum_{T\in {\cal T}_h}\langle (u-{\cal Q}_hu) \bbeta\cdot\bn, \sigma_b-\sigma_0\rangle_\pT,
\end{split}
\end{equation}
where we have used $\eqref{model}$ and $\sigma_b=0$ on $\partial\Omega$.
From $\lambda=0$, we have  $s(Q_h\lambda, \sigma)=0$. Subtracting (\ref{32}) from (\ref{err:02}) yields the error equation (\ref{sehv}). This completes the proof of the lemma.
\end{proof}

The equations (\ref{sehv})-(\ref{sehv2}) are called {\em error
equations} for the primal-dual WG finite element scheme
(\ref{32})-(\ref{2}).

\section{$L^q$-Error Estimate for the Primal Variable $u_h$}\label{Section:ES-1}
Recall that $\T_h$ is a shape-regular finite element partition of
the domain $\Omega$. For any $T\in\T_h$ and $\nabla w\in L^{q}(T)$ with $q>1$, the following trace inequality holds true:
\begin{equation}\label{trace-inequality}
\|w\|^q_{L^q(\pT)}\leq C
h_T^{-1}(\|w\|_{L^{q}(T)}^q+h_T^{q} \| \nabla w\|_{L^{q}(T)}^q).
\end{equation}

\begin{theorem} \label{theoestimate-1} Let $q>1$ and $k\geq 1$. Let $u$ and $(u_h;\lambda_h) \in M_h\times W_h^0$ be the exact solution of the second order elliptic model problem \eqref{model} and the numerical solution arising from PDWG scheme (\ref{32})-(\ref{2}). The following error estimate holds true:
\begin{equation}\label{sestimate}
s(\lambda_h,\lambda_h) \leq Ch^{qk}\|\nabla^{k}u\|^q_{L^q(\Omega)}.
\end{equation}
 \end{theorem}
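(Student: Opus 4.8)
The plan is to test the error equation \eqref{sehv} against $\sigma=\epsilon_h$ and exploit the second error equation to kill the bilinear term. Since $e_h={\cal Q}_hu-u_h\in M_h$, equation \eqref{sehv2} gives $b(e_h,\epsilon_h)=0$, so that
\[
s(\epsilon_h,\epsilon_h)=l_u(\epsilon_h).
\]
Because $\epsilon_h=-\lambda_h$ and the stabilizer is built from even functions of the jumps, one has $s(\epsilon_h,\epsilon_h)=s(\lambda_h,\lambda_h)$, so the whole problem reduces to bounding the linear functional $l_u(\epsilon_h)$ from \eqref{lu} by a constant times $h^k\|\nabla^k u\|_{L^q(\Omega)}\,s(\epsilon_h,\epsilon_h)^{1/p}$. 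It is convenient to record the diagonal form
\[
s(\sigma,\sigma)=\sum_{T\in\T_h}\Big(h_T^{1-2p}\|\sigma_0-\sigma_b\|_{L^p(\pT)}^p+h_T^{1-p}\|\nabla\sigma_0\cdot\bn-\sigma_n\|_{L^p(\pT)}^p\Big),
\]
which exhibits the two boundary seminorms that will reappear after applying H\"older's inequality.

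Next I would estimate each of the three sums in \eqref{lu} by H\"older's inequality on $\pT$ with exponents $q$ and $p$. For the first term this pairs $\|u-\Q_hu\|_{L^q(\pT)}$ with $\|\sigma_n-\nabla\sigma_0\cdot\bn\|_{L^p(\pT)}=h_T^{1/q}\big(h_T^{1-p}\|\nabla\sigma_0\cdot\bn-\sigma_n\|_{L^p(\pT)}^p\big)^{1/p}$, using $(p-1)/p=1/q$; the second term pairs $\|\nabla(u-\Q_hu)\cdot\bn\|_{L^q(\pT)}$ with $\|\sigma_0-\sigma_b\|_{L^p(\pT)}=h_T^{1+1/q}\big(h_T^{1-2p}\|\sigma_0-\sigma_b\|_{L^p(\pT)}^p\big)^{1/p}$, using $(2p-1)/p=1+1/q$; the third (convection) term is of the same type as the first but paired with the $\sigma_0-\sigma_b$ seminorm. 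I then bound the factors involving $u-\Q_hu$ via the trace inequality \eqref{trace-inequality} together with the standard $L^q$ approximation estimates for the $L^2$ projection $\Q_h$ onto $P_{k-1}(T)$, namely $\|u-\Q_hu\|_{L^q(\pT)}\le Ch_T^{k-1/q}\|\nabla^ku\|_{L^q(T)}$ and $\|\nabla(u-\Q_hu)\|_{L^q(\pT)}\le Ch_T^{k-1-1/q}\|\nabla^ku\|_{L^q(T)}$. The crucial bookkeeping is that in each term the $h_T$ powers from the stabilizer weights combine with those from the trace/approximation bounds to produce exactly $h_T^{k}$ (indeed $h_T^{k+1}$ for the convection term, which is harmless). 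A discrete H\"older inequality over $T\in\T_h$ then yields
\[
|l_u(\epsilon_h)|\le Ch^{k}\|\nabla^ku\|_{L^q(\Omega)}\,s(\epsilon_h,\epsilon_h)^{1/p}.
\]

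Finally I would close the estimate by combining $s(\epsilon_h,\epsilon_h)=l_u(\epsilon_h)$ with the displayed bound: dividing by $s(\epsilon_h,\epsilon_h)^{1/p}$ (trivial if it vanishes) and using $1-\frac1p=\frac1q$ gives $s(\epsilon_h,\epsilon_h)^{1/q}\le Ch^{k}\|\nabla^ku\|_{L^q(\Omega)}$, and raising to the power $q$ produces \eqref{sestimate} after identifying $s(\epsilon_h,\epsilon_h)=s(\lambda_h,\lambda_h)$. The main obstacle I anticipate is the power-counting in the H\"older step: one must track the element-size weights carefully so that the fractional exponents coming from the stabilizer scalings $h_T^{1-2p},h_T^{1-p}$ and from the $L^q$ trace and projection estimates cancel to the clean factor $h_T^{k}$; the self-referential appearance of $s(\epsilon_h,\epsilon_h)^{1/p}$ on the right, which must be absorbed rather than estimated, is what makes the argument hinge on getting these exponents exactly right. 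A minor technical point is ensuring the $W^{2,q}$-type approximation bound used for the gradient trace is available for all $k\ge1$, which for $k=1$ requires treating the second-derivative term in \eqref{trace-inequality} directly.
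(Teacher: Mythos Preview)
Your proposal is correct and follows essentially the same route as the paper: test the error equation with $\sigma=\epsilon_h=-\lambda_h$, use \eqref{sehv2} to kill the $b$-term, and then bound the three boundary sums in $l_u$ by H\"older on $\partial T$ combined with the trace inequality \eqref{trace-inequality} and the $L^q$ approximation of $\mathcal Q_h$, tracking the $h_T$ powers so that the stabilizer weights $h_T^{1-2p}$, $h_T^{1-p}$ emerge on the $\lambda$-factors. The only cosmetic difference is the closing step: the paper applies Young's inequality to each term to produce $C_i\,s(\lambda_h,\lambda_h)+C_j h^{qk}\|\nabla^k u\|_{L^q}^q$ and absorbs, whereas you divide through by $s(\epsilon_h,\epsilon_h)^{1/p}$ and raise to the $q$th power; these are equivalent.
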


\begin{proof} Recall that $\epsilon_h = - \lambda_h$. By letting $\sigma = -\lambda_h$  in (\ref{sehv}),  we have from (\ref{sehv2}) and \eqref{lu} that
\begin{equation}\label{EQ:April7:001}
\begin{split}
s(\lambda_h, \lambda_h)
= &\sum_{T\in {\cal T}_h} \langle u-{\cal Q}_h u, \nabla \lambda_0\cdot\bn-\lambda_n\rangle_{\partial T}+\langle \nabla  u\cdot\bn - \nabla {\cal Q}_h  u\cdot\bn , \lambda_b-\lambda_0\rangle_{\partial T}\\
& + \sum_{T\in {\cal T}_h}\langle (u-{\cal Q}_hu) \bbeta\cdot\bn, \lambda_0-\lambda_b\rangle_\pT.
\end{split}
\end{equation}
For the first term on the right-hand side of \eqref{EQ:April7:001}, we use the Cauchy-Schwarz inequality to obtain
\begin{equation}\label{e1}
\begin{split}
&\Big|\sum_{T\in {\cal T}_h}\langle {\cal Q}_hu - u, \lambda_n-\nabla \lambda_0\cdot\bn\rangle_{\partial T}\Big|\\
 \leq & \Big(\sum_{T\in {\cal T}_h}\| u-{\cal Q}_hu\|^q_{L^q(\pT)}\Big)^{\frac{1}{q}} \Big(\sum_{T\in {\cal T}_h}\|  \lambda_n-\nabla \lambda_0\cdot\bn\|^p_{L^p(\pT)}\Big)^{\frac{1}{p}}.
\end{split}
\end{equation}
For the term $\| u-{\cal Q}_hu\|^q_{L^q(\pT)}$, we have from the trace inequality \eqref{trace-inequality} that
\begin{equation}\label{te2}
\begin{split}
\sum_{T\in {\cal T}_h}\| u-{\cal Q}_hu\|^q_{L^q(\pT)}
\leq &  \sum_{T\in {\cal T}_h}h_T^{-1}\Big(\| u-{\cal Q}_hu\|^q_{L^q(T)}+h_T^q\|\nabla(u-{\cal Q}_hu)\|^q_{L^q(T)}\Big)\\
 \leq & Ch^{kq-1}\|\nabla^{k}u\|^q_{L^q(\Omega)}.
\end{split}
\end{equation}
Substituting (\ref{te2}) into (\ref{e1}) gives
\begin{equation}\label{e3}
\begin{split}
&\Big|\sum_{T\in {\cal T}_h}\langle {\cal Q}_hu - u, \lambda_n-\nabla \lambda_0\cdot\bn\rangle_{\partial T}\Big|\\
\leq & Ch^{k}\|\nabla^{k}u\|_{L^q(\Omega)} \Big(\sum_{T\in {\cal T}_h}h_T^{1-p} \|\lambda_n-\nabla \lambda_0\cdot\bn\|^p_{L^p(\pT)}\Big)^{\frac{1}{p}}\\
\leq & C_1  \sum_{T\in {\cal T}_h}  h_T^{1-p} \|\epsilon_n-\nabla \epsilon_0\cdot\bn\|^p_{L^p(\pT)} + C_2 h^{qk}\|\nabla^{k}u\|^q_{L^q(\Omega)}.
\end{split}
\end{equation}

As to the second term on the right-hand side in \eqref{EQ:April7:001}, using the Cauchy-Schwarz inequality we have
\begin{equation}\label{e2}
\begin{split}
&\Big|\langle \nabla {\cal Q}_h  u\cdot\bn - \nabla  u\cdot\bn  , \lambda_b-\lambda_0\rangle_{\partial T}\big| \\
\leq &  \Big(\sum_{T\in {\cal T}_h}\| \nabla  u\cdot\bn-\nabla {\cal Q}_h  u\cdot\bn\|^q_{L^q(\pT)}\Big)^{\frac{1}{q}} \Big(\sum_{T\in {\cal T}_h}\|  \lambda_b-\lambda_0\|^p_{L^p(\pT)}\Big)^{\frac{1}{p}}.
\end{split}
\end{equation}
For the term $\|\nabla  u\cdot\bn-\nabla {\cal Q}_h  u\cdot\bn\|^q_{L^q(\pT)}$, we have from the trace inequality \eqref{trace-inequality} that
\begin{equation}\label{te3}
\begin{split}
&\sum_{T\in {\cal T}_h}\| \nabla  u\cdot\bn-\nabla {\cal Q}_h  u\cdot\bn\|^q_{L^q(\pT)} \\
\leq &  \sum_{T\in {\cal T}_h}h_T^{-1}\Big(\|  \nabla  u-\nabla {\cal Q}_h  u\|^q_{L^q(T)}+h_T^q\|\nabla( \nabla  u-\nabla {\cal Q}_h  u)\|^q_{L^q(T)}\Big)\\
 \leq & Ch^{(k-1)q-1}\|\nabla^{k}u\|^q_{L^q(\Omega)}.
\end{split}
\end{equation}
Substituting (\ref{te3}) into (\ref{e2}) gives
\begin{equation}\label{e4}
\begin{split}
&\Big|\sum_{T\in {\cal T}_h}\langle  \nabla {\cal Q}_h  u\cdot\bn - \nabla  u\cdot\bn , \lambda_b-\lambda_0\rangle_{\partial T} \Big|\\
\leq & Ch^{k}\|\nabla^{k}u\|_{L^q(\Omega)} \Big(\sum_{T\in {\cal T}_h}h_T^{1-2p} \|\lambda_b-\lambda_0\|^p_{L^p(\pT)}\Big)^{\frac{1}{p}}\\
\leq & C_3  \sum_{T\in {\cal T}_h}  h_T^{1-2p} \|\lambda_b-\lambda_0\|^p_{L^p(\pT)} + C_4 h^{qk}\|\nabla^{k}u\|^q_{L^q(\Omega)}.
\end{split}
\end{equation}
The third term can be analogously estimated by
\begin{equation}\label{e4:001}
\begin{split}
&\Big|\sum_{T\in {\cal T}_h}\langle (u-{\cal Q}_hu) \bbeta\cdot\bn, \lambda_0-\lambda_b\rangle_\pT\Big| \\
\leq &  C_5  \sum_{T\in {\cal T}_h}  h_T^{1-2p} \|\lambda_b-\lambda_0\|^p_{L^p(\pT)} + C_6 h^{qk}\|\nabla^{k}u\|^q_{L^q(\Omega)}.
\end{split}
\end{equation}
Substituting (\ref{e3}), (\ref{e4}), and \eqref{e4:001} into (\ref{EQ:April7:001}) gives
$$
(1-C_1-C_3-C_5) s(\lambda_h, \lambda_h)\leq Ch^{qk}\|\nabla^{k}u\|^q_{L^q(\Omega)},
$$
which leads to
$$
s(\lambda_h, \lambda_h)\leq Ch^{qk}\|\nabla^{k}u\|^q_{L^q(\Omega)}
$$
by choosing $C_i$ such that $1-C_1-C_3-C_5\ge C_0>0$. This completes the proof of the theorem.
\end{proof}

Consider the auxiliary problem that seeks $\phi$ such that
\begin{equation}\label{dual}
\begin{split}
-\Delta \phi - \bbeta\cdot\nabla\phi =\ &\psi,  \quad \text{in} \ \Omega,\\
\phi=\ &0, \quad \text{ on} \ \partial \Omega,
\end{split}
\end{equation}
where $\psi\in L^p(\Omega)$ is a given function. The problem \eqref{dual} is said to have the $W^{2,p}$-regularity if there exists a constant $C$ independent of $\psi$ satisfying
\begin{equation}\label{dual:regularity}
\|\phi\|_{2,p} \leq C \|\psi\|_{0,p}.
\end{equation}

The following is the main error estimate for the approximation $u_h$.

\begin{theorem} \label{theoestimate} Let $u$ and $(u_h;\lambda_h) \in M_h\times W_h^0$ be the exact solution of the second order elliptic problem \eqref{model} and its numerical solution arising from the PDWG scheme (\ref{32})-(\ref{2}). Assume the $W^{2,p}$-regularity estimate \eqref{dual:regularity} holds true for the auxiliary problem \eqref{dual}. Then the following $L^q$-error estimate holds true:
\begin{equation}\label{ErrorEstimate:Lq}
\|u^* - u_h\|_{L^q(\Omega)}\leq C h^{k}  \|\nabla^{k}u\|_{L^q(\Omega)},
\end{equation}
where $u^*$ is the solution of the following problem:
\begin{equation}\label{model:02}
\begin{split}
-\Delta u^* + \nabla\cdot(\bbeta u^*) &=Q_0 f, \quad \text{in} \ \Omega,\\
u^*&=Q_n g, \quad \text{on} \  \partial \Omega,
\end{split}
\end{equation}
 \end{theorem}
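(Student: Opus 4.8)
The plan is to run a duality (Aubin--Nitsche) argument, transplanted from the usual $L^2$ setting to the $L^q$--$L^p$ pairing. Since $L^q(\Omega)=(L^p(\Omega))^*$, I would represent the error through
\[
\|{\cal Q}_hu^*-u_h\|_{L^q(\Omega)}=\sup_{0\neq\psi\in L^p(\Omega)}\frac{({\cal Q}_hu^*-u_h,\ \psi)}{\|\psi\|_{0,p}},
\]
and estimate the numerator for a fixed but arbitrary $\psi\in L^p(\Omega)$. For each such $\psi$, let $\phi$ be the solution of the auxiliary problem \eqref{dual}; the $W^{2,p}$-regularity \eqref{dual:regularity} gives $\|\phi\|_{2,p}\le C\|\psi\|_{0,p}$, and since $\phi=0$ on $\partial\Omega$ we have $Q_b\phi=0$ there, so $Q_h\phi\in W_h^0$ is an admissible test function.

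The first key step is the identity $b(v,Q_h\phi)=(v,\psi)$ for every $v\in M_h$. This follows from the commutativity relations of Lemma~\ref{Lemma5.1}, namely $\Delta_w(Q_h\phi)={\cal Q}_h(\Delta\phi)$ and $\nabla_w(Q_h\phi)={\cal Q}_h(\nabla\phi)$: because $\bbeta$ is piecewise constant, both $v$ and $\bbeta v$ lie componentwise in $M_h$, so the self-adjointness of ${\cal Q}_h$ strips off the projections and the dual equation $-\Delta\phi-\bbeta\cdot\nabla\phi=\psi$ yields $b(v,Q_h\phi)=\sum_{T\in\T_h}(v,-\bbeta\cdot\nabla\phi-\Delta\phi)_T=(v,\psi)$. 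I would then note that $u_h$ is simultaneously the PDWG approximation associated with the data $(f,g)$ and with the projected data $(Q_0f,Q_ng)$, since the right-hand side of \eqref{32} only sees $(f,\sigma_0)=(Q_0f,\sigma_0)$ and $\langle g,\sigma_n\rangle=\langle Q_ng,\sigma_n\rangle$; hence Lemma~\ref{errorequa} applies verbatim with $u$ replaced by $u^*$ and residual $l_{u^*}$. Choosing $v={\cal Q}_hu^*-u_h\in M_h$ and $\sigma=Q_h\phi$ in that error equation produces the representation
\[
({\cal Q}_hu^*-u_h,\ \psi)=b({\cal Q}_hu^*-u_h,\ Q_h\phi)=l_{u^*}(Q_h\phi)-s(\epsilon_h,Q_h\phi).
\]

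It then remains to bound the two contributions. For the residual $l_{u^*}(Q_h\phi)$ I would apply H\"older edge-by-edge, pairing the approximation errors of $u^*$ (controlled by the trace inequality \eqref{trace-inequality} exactly as in \eqref{te2} and \eqref{te3}, giving factors $h^{k-1/q}$ and $h^{k-1-1/q}$ times $\|\nabla^ku\|_{L^q(\Omega)}$) against the dual approximation quantities $\sigma_0-\sigma_b=Q_0\phi-Q_b\phi$ and $\sigma_n-\nabla\sigma_0\cdot\bn=Q_n(\nabla\phi\cdot\bn)-\nabla Q_0\phi\cdot\bn$ (controlled by trace plus approximation as $h^{2-1/p}\|\phi\|_{2,p}$ and $h^{1-1/p}\|\phi\|_{2,p}$). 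The decisive arithmetic is that these $h$-powers combine through $\tfrac1p+\tfrac1q=1$ into precisely $h^{k}\|\nabla^ku\|_{L^q(\Omega)}\|\phi\|_{2,p}$. For the stabilizer $s(\epsilon_h,Q_h\phi)$ I would use H\"older with the conjugate pair $(q,p)$ together with the $|\cdot|^{p-1}$ structure of $s$ to obtain $|s(\epsilon_h,Q_h\phi)|\le C\,s(\epsilon_h,\epsilon_h)^{1/q}\|\phi\|_{2,p}$, the weights $h_T^{1-2p}$ and $h_T^{1-p}$ again being absorbed cleanly by the dual approximation bounds; invoking Theorem~\ref{theoestimate-1}, i.e. $s(\epsilon_h,\epsilon_h)=s(\lambda_h,\lambda_h)\le Ch^{qk}\|\nabla^ku\|^q_{L^q(\Omega)}$, turns this into $Ch^{k}\|\nabla^ku\|_{L^q(\Omega)}\|\phi\|_{2,p}$. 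Combining the two bounds, using $\|\phi\|_{2,p}\le C\|\psi\|_{0,p}$, and finally adding the projection error $\|u^*-{\cal Q}_hu^*\|_{L^q(\Omega)}\le Ch^{k}\|\nabla^ku\|_{L^q(\Omega)}$ (since ${\cal Q}_h$ reproduces $P_{k-1}$) yields the claimed estimate \eqref{ErrorEstimate:Lq}.

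The main obstacle is the residual estimate: each of the three boundary pairings in $l_{u^*}$ (see \eqref{lu}) must be split correctly into an $L^q$-factor carried by $u^*-{\cal Q}_hu^*$ or its normal derivative and an $L^p$-factor carried by the dual projection errors, after which one must check that the negative $h$-powers from the trace inequalities cancel exactly against the positive powers from the approximation estimates; this cancellation is where the conjugacy $\tfrac1p+\tfrac1q=1$ is essential, and any mismatch would destroy the optimal $h^{k}$ rate. A secondary technical point is the $L^p$ treatment of $s(\cdot,\cdot)$: because it is built from the nonlinear pairing $|\cdot|^{p-1}\mathrm{sgn}(\cdot)$, the bound $|s(\epsilon_h,Q_h\phi)|\le C\,s(\epsilon_h,\epsilon_h)^{1/q}(\cdots)^{1/p}$ cannot come from a plain Cauchy--Schwarz step but requires a discrete H\"older inequality in which the weight $h_T^{1-2p}$ (respectively $h_T^{1-p}$) is distributed as $h_T^{(1-2p)(p-1)/p}\cdot h_T^{(1-2p)/p}$ before summation over $T$.
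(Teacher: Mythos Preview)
Your duality setup and the treatment of $s(\lambda_h,Q_h\phi)$ are correct and match the paper. The paper, however, takes a shorter route to the representation formula: instead of invoking the error equation for $u^*$, it tests the scheme \eqref{32} directly with $\sigma=Q_h\phi$, uses the commutativity relations to get $b(u_h,Q_h\phi)=(u_h,\psi)$, and then recognizes that the right-hand side $(f,Q_0\phi)-\langle g,Q_n(\nabla\phi\cdot\bn)\rangle_{\partial\Omega}=(Q_0f,\phi)-\langle Q_ng,\nabla\phi\cdot\bn\rangle_{\partial\Omega}$ equals $(u^*,\psi)$ by the very definition of $u^*$ and an integration by parts. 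This yields the clean identity $(u^*-u_h,\psi)=s(\lambda_h,Q_h\phi)$ with \emph{no} residual term $l_{u^*}$ and no projection error to add back.

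Your detour through the error equation creates a real difficulty. The two extra pieces you introduce, $l_{u^*}(Q_h\phi)$ and $\|u^*-{\cal Q}_hu^*\|_{L^q}$, both call for approximation estimates on $u^*$ and hence $W^{k,q}$-regularity of $u^*$. But $u^*$ solves an elliptic problem with piecewise polynomial (discontinuous) data $Q_0f$ and $Q_ng$, so there is no reason to expect $u^*\in W^{k,q}(\Omega)$ when $k>2$; elliptic regularity gives only $u^*\in W^{2,q}$. Your claim that the trace and approximation bounds produce factors of $\|\nabla^k u\|_{L^q(\Omega)}$ is therefore unjustified: at best they would involve $\|\nabla^k u^*\|_{L^q(\Omega)}$, which may be infinite. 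The cancellation you are missing is in fact exact: one can verify that $(u^*-{\cal Q}_hu^*,\psi)=-l_{u^*}(Q_h\phi)$, so your two extra terms annihilate and the paper's identity is recovered. Either exploit that cancellation explicitly, or---more simply---test \eqref{32} directly as the paper does and avoid the error equation altogether.
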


\begin{proof} For the solution $\phi$ of \eqref{dual}, by choosing $\sigma=Q_h \phi \in W_{h}^0$ in (\ref{32}) we obtain
$$
s(\lambda_h, Q_h \phi)+ (u_h, - \bbeta\cdot\nabla_w Q_h\phi - \Delta_w Q_h \phi) =  (f, Q_0\phi) - \langle g, Q_n(\nabla\phi\cdot\bn)\rangle_{\partial\Omega}.
$$
From \eqref{l} and \eqref{l:2} we have
$$
s(\lambda_h, Q_h \phi)+(u_h,  - {\cal Q}_h (\bbeta\cdot\nabla\phi + \Delta \phi)) =  (f, Q_0\phi)- \langle g, Q_n(\nabla\phi\cdot\bn)\rangle_{\partial\Omega},
$$
and further from \eqref{model:02} and \eqref{dual},
\begin{eqnarray*}
s(\lambda_h, Q_h \phi)+(u_h,  \psi) &=& (Q_0 f, \phi) - \langle Q_n g, \nabla\phi\cdot\bn\rangle_{\partial\Omega} \\
&=& (u^*,  -\Delta\phi - \bbeta\cdot\nabla\phi )\\
&=& (u^*, \psi) .
\end{eqnarray*}
It follows that
\begin{equation}\label{sf}
 (u^*-u_h,  \psi) = s(\lambda_h, Q_h \phi).
\end{equation}

To deal with the term $s(\lambda_h, Q_h \phi)$, from \eqref{EQ:local-stabilizer} we have
\begin{equation}\label{sab}
\begin{split}
&s(\lambda_h, Q_h \phi)=\sum_{T\in {\cal T}_h}h_T^{1-2p}\int_{\partial T} |\lambda_0-\lambda_b|^{p-1}sgn(\lambda_0-\lambda_b)(Q_0\phi-Q_b \phi)ds\\
&+ \sum_{T\in {\cal T}_h} h_T^{1-p}\int_{\partial T}  |\nabla \lambda_0 \cdot \bn-\lambda_n|^{p-1}sgn (\nabla \lambda_0 \cdot \bn-\lambda_n) (\nabla Q_0 \phi \cdot \bn-Q_n\phi)ds\\
&=I_1+I_2.
\end{split}
\end{equation}
For the term $I_1$, we use the Cauchy-Schwarz inequality and the trace inequality \eqref{trace-inequality} to obtain
\begin{equation}\label{I1}
\begin{split}
I_1&= \sum_{T\in {\cal T}_h}h_T^{1-2p}\int_{\partial T} |\lambda_0-\lambda_b|^{p-1}sgn(\lambda_0-\lambda_b)(Q_0\phi-Q_b \phi)ds
\\
&\leq \sum_{T\in {\cal T}_h}h_T^{1-2p} \Big(\int_{\partial T} |\lambda_0-\lambda_b|^{p}ds\Big)^{\frac{1}{q}} \Big(\int_{\partial T}(Q_0\phi-Q_b \phi)^pds\Big)^{\frac{1}{p}}\\
&\leq C\sum_{T\in {\cal T}_h}h_T^{1-2p}h_T^{(2p-1)/q}\Big(\int_{\partial T} h_T^{1-2p}|\lambda_0-\lambda_b|^{p}ds\Big)^{\frac{1}{q}} \\&\cdot \Big(h_T^{-1}\|Q_0\phi-\phi\|^p_{L^p(T)}+ h_T^{p-1}\|\nabla(Q_0\phi-\phi)\|^p_{L^p(T)}\Big)^{\frac{1}{p}}\\
&\leq Cs(\lambda_h,\lambda_h)^{\frac{1}{q}} \sum_{T\in {\cal T}_h} h_T^{\frac{1-2p}{p}} \Big(h_T^{2p-1}\|\nabla ^2\phi\|^p_{L^p(T)}\Big)^{\frac{1}{p}}\\
&\leq C s(\lambda_h,\lambda_h)^{\frac{1}{q}}\|\nabla ^2\phi\|_{L^p(\Omega)}. \\
\end{split}
\end{equation}
Similarly, for $I_2$ we again use the Cauchy-Schwarz inequality and the trace inequality \eqref{trace-inequality} to obtain
\begin{equation}\label{I2}
\begin{split}
I_2&= \sum_{T\in {\cal T}_h} h_T^{1-p}\int_{\partial T}  |\nabla \lambda_0 \cdot \bn-\lambda_n|^{p-1}sgn (\nabla \lambda_0 \cdot \bn-\lambda_n) (\nabla Q_0 \phi \cdot \bn-Q_n\phi)ds
\\
&\leq \sum_{T\in {\cal T}_h}h_T^{1-p} \Big(\int_{\partial T} |\nabla \lambda_0 \cdot \bn-\lambda_n|^{p} ds\Big)^{\frac{1}{q}} \Big(\int_{\partial T}(\nabla Q_0 \phi \cdot \bn-Q_n\phi)^pds\Big)^{\frac{1}{p}}\\
&\leq C\sum_{T\in {\cal T}_h}h_T^{1-p}h_T^{(p-1)/q}\Big(\int_{\partial T} h_T^{1-p}|\nabla \lambda_0 \cdot \bn-\lambda_n|^{p}ds\Big)^{\frac{1}{q}} \\
&\ \ \Big(h_T^{-1}\|\nabla Q_0 \phi - \nabla \phi\|^p_{L^p(T)}+ h_T^{p-1}\|\nabla(\nabla Q_0 \phi -\nabla\phi)\|^p_{L^p(T)}\Big)^{\frac{1}{p}}\\
&\leq C s(\lambda_h,\lambda_h)^{\frac{1}{q}}\|\nabla ^2\phi\|_{L^p(\Omega)}. \\
\end{split}
\end{equation}
Substituting (\ref{I1}) and \eqref{I2} into \eqref{sab} and using \eqref{sestimate} and \eqref{dual:regularity} yields
\begin{equation*}
\begin{split}
\Big|s(\lambda_h, Q_h \phi)\Big|\leq & Cs(\lambda_h,\lambda_h)^{\frac{1}{q}}\|\nabla ^2\phi\|_{L^p(\Omega)}\\
\leq & C \Big(h^{qk}\|\nabla^{k}u\|^q_{L^q(\Omega)}\Big)^{\frac{1}{q}}\|\nabla ^2\phi\|_{L^p(\Omega)}\\
\leq & C h^{k}  \|\nabla^{k}u\|_{L^q(\Omega)}\|\nabla ^2\phi\|_{L^p(\Omega)}\\
\leq & C h^{k}  \|\nabla^{k}u\|_{L^q(\Omega)}\|\psi\|_{L^p(\Omega)},
\end{split}
\end{equation*}
which, together with (\ref{sf}), leads to
 \begin{equation*}
\begin{split}
\big|(u^* - u_h,  \psi)\Big|\leq C h^{k}  \|\nabla^{k}u\|_{L^q(\Omega)}\|\psi\|_{L^p(\Omega)},
\end{split}
\end{equation*}
so that
$$
\|u^* - u_h\|_{L^q(\Omega)}\leq C h^{k}  \|\nabla^{k}u\|_{L^q(\Omega)}.
$$
This completes the proof of the theorem.
\end{proof}

\section{Error Estimates for the Dual Variable}\label{Section:ES-2}
 In this section  we shall establish some error estimates for the dual variable $\lambda_h$
 in $W^{1,p}$ and $L^p$. To this end, let $\varphi$ be the solution of the following auxiliary problem
\begin{eqnarray}\label{dual-problem:new}
\begin{aligned}
    -\triangle \varphi + \bbeta\cdot\nabla\varphi &=\theta, \ \ \mbox{ in} \ \Omega, \\
   \varphi&=0,\ \  \mbox{ on} \ \partial\Omega,
\end{aligned}
\end{eqnarray}
where $\theta$ is a given function in $L^q(\Omega)$. Assume the dual problem (\ref{dual-problem:new}) has the $W^{2,q}$-regularity in the sense that there exists a constant $C$ such that
\begin{equation}\label{regularity:1}
\|\varphi\|_{2,q}\leq C \|\theta\|_{0,q}.
\end{equation}

From \eqref{Loperator1-2} and the usual integration by parts we have
\begin{equation*}
\begin{split}
 & (\triangle_w v, \varphi) =  (\triangle_w v, {\cal Q}_h\varphi) \\
= & \sum_{T\in\T_h} ( \triangle v_0, {\cal Q}_h\varphi)_T + \langle v_0-v_b, \nabla ({\cal Q}_h\varphi)\cdot\bn \rangle_\pT + \langle v_n - \nabla v_0 \cdot\bn, {\cal Q}_h\varphi\rangle_\pT\\
= & \sum_{T\in\T_h} ( \triangle v_0, \varphi)_T + \langle v_0-v_b, \nabla ({\cal Q}_h\varphi)\cdot\bn \rangle_\pT + \langle v_n - \nabla v_0 \cdot\bn, {\cal Q}_h\varphi\rangle_\pT\\
= & \sum_{T\in\T_h} (v_0, \triangle\varphi)_T + \langle v_0-v_b, \nabla ({\cal Q}_h\varphi -\varphi)\cdot\bn \rangle_\pT + \langle v_n - \nabla v_0 \cdot\bn, {\cal Q}_h\varphi -\varphi\rangle_\pT
\end{split}
\end{equation*}
so that
\begin{equation}\label{e:11}
\begin{split}
      (v_0, \triangle\varphi)=&\ (\triangle_w v, \varphi) + \sum_{T\in\T_h}\langle
  \varphi-{\cal Q}_h \varphi,  v_n-\nabla v_0\cdot{\bf n}\rangle_\pT\\
&   + \sum_{T\in\T_h}\langle v_0-v_b,\nabla(\varphi-{\cal Q}_h \varphi)\cdot {\bf n}\rangle_\pT.
\end{split}
\end{equation}
Analogously, from \eqref{Gradient-2} we have (note that $s=k-1$ so that $\nabla v_0\in [P_s(T)]^d$)
\begin{equation}\label{e:11:02}
\begin{split}
(v_0, \bbeta\cdot\nabla\varphi)_T&= - (\nabla v_0, \bbeta\varphi)_T +\langle v_0, \varphi \bbeta\cdot\bn\rangle_\pT \\
&=- (\nabla v_0, \bbeta{\cal Q}_h\varphi)_T +\langle v_0, \varphi \bbeta\cdot\bn\rangle_\pT\\
&=- (\bbeta\cdot\nabla_w v, \varphi)_T +\langle v_b-v_0, {\cal Q}_h\varphi\bbeta\cdot\bn\rangle_\pT
+\langle v_0, \varphi \bbeta\cdot\bn\rangle_\pT.\\
\end{split}
\end{equation}
Summing \eqref{e:11:02} over all $T\in\T_h$ yields
\begin{equation}\label{e:11:05}
\begin{split}
(v_0, \bbeta\cdot\nabla\varphi)
=&- (\bbeta\cdot\nabla_w v, \varphi)+ \sum_{T\in\T_h}\langle v_0-v_b, (I-{\cal Q}_h)\varphi\bbeta\cdot\bn\rangle_\pT.
\end{split}
\end{equation}

We have the following error estimates for the variable $\lambda_0$.

\begin{theorem}\label{theo:1}
 Let $u$ and $(u_h;\lambda_h) \in M_h\times W_h^0$ be the  solutions of  \eqref{model} and (\ref{32})-(\ref{2}), respectively. Assume that the dual problem \eqref{dual-problem:new}
has the $W^{2,q}(\Omega)$ regularity with the a priori estimate \eqref{regularity:1}.
Then the following estimate hold true:
\begin{equation}\label{es:2}
   \|\lambda_0\|_{0,p}\le
\left\{
\begin{array}{ll}
C h^{q}\|u\|^{q/p}_{1,q}, &\quad k=1,\\
C h^{(q-1)k+2}\|u\|^{q/p}_{k,q}, &\quad k>1.
\end{array}
\right.
\end{equation}
\end{theorem}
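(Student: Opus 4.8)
The plan is to run a duality argument in the spirit of the $L^q$-estimate for $u_h$ (Theorem \ref{theoestimate}), but now testing against the solution $\varphi$ of the adjoint problem \eqref{dual-problem:new} rather than \eqref{dual}. I expect the structure to be: start from the identity $(\lambda_0, \theta) = (\lambda_0, -\triangle\varphi + \bbeta\cdot\nabla\varphi)$, expand both pieces using the two prepared identities \eqref{e:11} and \eqref{e:11:05}, and thereby reassemble a combination of $b(v,\epsilon_h)$-type terms plus boundary remainders involving $\varphi - {\cal Q}_h\varphi$. Since $\|\lambda_0\|_{0,p} = \sup_{\theta} (\lambda_0,\theta)/\|\theta\|_{0,q}$ by duality, controlling $(\lambda_0,\theta)$ uniformly in $\theta$ with $\|\theta\|_{0,q}=1$ will give the bound, using the $W^{2,q}$-regularity \eqref{regularity:1} to turn $\|\varphi\|_{2,q}$ into $\|\theta\|_{0,q}$.

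**Key steps.** First I would substitute $v_0 = \lambda_0$ into \eqref{e:11} and \eqref{e:11:05} and add them, so that the left-hand side becomes $(\lambda_0, -\triangle\varphi + \bbeta\cdot\nabla\varphi) = (\lambda_0,\theta)$ and the leading right-hand side terms combine into $-(\bbeta\cdot\nabla_w\lambda_h + \triangle_w\lambda_h, \varphi)$, i.e.\ a $b(\cdot,\epsilon_h)$-type expression. The natural move is then to replace $\varphi$ here by ${\cal Q}_h\varphi \in M_h$ (paying a consistency term $\varphi - {\cal Q}_h\varphi$) so that the error equation \eqref{sehv2}, namely $b(v,\epsilon_h)=0$ for $v\in M_h$, annihilates that leading term. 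What survives is a sum of boundary integrals of the form $\langle \varphi - {\cal Q}_h\varphi,\ \lambda_n - \nabla\lambda_0\cdot\bn\rangle_\pT$ and $\langle \lambda_0-\lambda_b,\ \nabla(\varphi-{\cal Q}_h\varphi)\cdot\bn\rangle_\pT$ together with the convective boundary remainder $\langle \lambda_0-\lambda_b, (I-{\cal Q}_h)\varphi\,\bbeta\cdot\bn\rangle_\pT$. Each of these I would bound by Hölder's inequality on $\partial T$, splitting off the factors $\|\lambda_n-\nabla\lambda_0\cdot\bn\|_{L^p(\pT)}$ and $\|\lambda_0-\lambda_b\|_{L^p(\pT)}$ weighted by the appropriate powers of $h_T$ so as to reconstruct $s(\lambda_h,\lambda_h)^{1/p}$, and estimating the remaining $\varphi - {\cal Q}_h\varphi$ factors via the trace inequality \eqref{trace-inequality} and standard approximation properties of the $L^2$-projection, producing powers of $h_T$ times $\|\nabla^2\varphi\|_{L^q(T)}$.

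**Assembling the estimate.** After these bounds I expect to arrive at $|(\lambda_0,\theta)| \le C\, s(\lambda_h,\lambda_h)^{1/p}\, h^{m}\,\|\varphi\|_{2,q}$ for some exponent $m$ coming from the combination of trace/approximation powers, and then I would invoke Theorem \ref{theoestimate-1}, which gives $s(\lambda_h,\lambda_h)^{1/p} \le C h^{qk/p}\|\nabla^k u\|_{L^q(\Omega)}^{q/p}$. The case split between $k=1$ and $k>1$ in \eqref{es:2} should emerge precisely because the approximation order of $\varphi-{\cal Q}_h\varphi$ on $\partial T$ saturates differently: for $k=1$ the polynomial degrees $s=k-1=0$ limit the achievable power, whereas for $k>1$ one gains the full $h^{(q-1)k+2}$. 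Combining with \eqref{regularity:1} and the duality characterization of the norm yields the claimed bound.

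**Main obstacle.** The delicate point, and where I would spend the most care, is the exact bookkeeping of the powers of $h_T$. The stabilizer $s(\lambda_h,\lambda_h)$ carries the specific weights $h_T^{1-2p}$ and $h_T^{1-p}$, and to recover $s(\lambda_h,\lambda_h)^{1/p}$ cleanly from the Hölder split one must distribute exactly the complementary weights onto the $\varphi-{\cal Q}_h\varphi$ factors; a single mismatched exponent changes the final rate. The second subtlety is justifying the degree-dependent approximation estimates for $(I-{\cal Q}_h)\varphi$ and its gradient on $\partial T$ consistently with the projection onto $P_s$ with $s=k-1$, since this is exactly what forces the $k=1$ versus $k>1$ dichotomy; I would verify that the trace inequality \eqref{trace-inequality} applied to $\nabla(\varphi-{\cal Q}_h\varphi)$ is legitimate given only $W^{2,q}$-regularity of $\varphi$, and track whether any term requires third derivatives that the regularity does not supply.
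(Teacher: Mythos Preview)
Your proposal is correct and follows essentially the same route as the paper: expand $(\lambda_0,\theta)$ via the identities \eqref{e:11} and \eqref{e:11:05}, kill the leading term using \eqref{2} together with the orthogonality of ${\cal Q}_h$, bound the three boundary remainders by H\"older plus the trace inequality to extract $s(\lambda_h,\lambda_h)^{1/p}$, and then invoke Theorem \ref{theoestimate-1} and the regularity \eqref{regularity:1}. One small simplification you will discover: the ``consistency term'' $(\Delta_w\lambda_h+\bbeta\cdot\nabla_w\lambda_h,\varphi-{\cal Q}_h\varphi)$ is in fact exactly zero (since $\Delta_w\lambda_h$ and $\nabla_w\lambda_h$ already lie in $M_h$ elementwise), so no estimation is needed there; the case split $k=1$ versus $k>1$ enters solely through the approximation order of ${\cal Q}_h$ in the boundary remainders, exactly as you anticipate.
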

\begin{proof}
For any given function $\theta\in L^q(\Omega)$, let $\varphi$ be the solution of \eqref{dual-problem:new}. From  \eqref{e:11} and \eqref{e:11:05} we have
 \begin{eqnarray*}
 (\theta, \lambda_0)&=&(\lambda_0,-\triangle\varphi+\bbeta\cdot\nabla\varphi)\\
&=&(-\triangle_w \lambda_h - \bbeta\nabla_w\lambda_h, \varphi) \\
&& + \sum_{T\in\T_h}\langle
  \varphi-{\cal Q}_h \varphi,  \nabla \lambda_0\cdot{\bf n}-\lambda_n\rangle_\pT +\sum_{T\in\T_h}\langle \lambda_b-\lambda_0,\nabla(\varphi-{\cal Q}_h \varphi)\cdot {\bf n}\rangle_\pT \\
&& + \sum_{T\in\T_h}\langle \lambda_0-\lambda_b, (I-{\cal Q}_h)\varphi\bbeta\cdot\bn\rangle_\pT\\
  &=&I_1+I_2+I_3+I_4.
 \end{eqnarray*}
We next estimate $I_i,\ 1\le i\le 4$,  respectively.   In light of \eqref{2}, we have
\begin{eqnarray*}
|I_1| &=& |(\triangle_w \lambda_h + \bbeta\cdot\nabla_w\lambda_h, \varphi)| \\
&=& |(\triangle_w \lambda_h + \bbeta\cdot\nabla_w\lambda_h, \varphi-{\cal Q}_h \varphi)| \\
&=& 0.
 \end{eqnarray*}
As to $I_2$, we have from the Cauchy-Schwarz inequality and the trace inequality \eqref{trace-inequality},
\begin{eqnarray*}
|I_2|&=& \left|  \sum_{T\in\T_h}\langle
  \varphi-{\cal Q}_h \varphi,  \nabla \lambda_0\cdot{\bf n}-\lambda_n\rangle_\pT    \right|\\
&\leq & \left(\sum_{T\in\T_h} h_T^{1-p} \| \nabla \lambda_0\cdot{\bf n}-\lambda_n\|^p_{p,\pT}\right)^{\frac1p} \left( \sum_{T\in\T_h} h_T \|\varphi-{\cal Q}_h \varphi \|^q_{q,\pT}\right)^{\frac1q}\\
&\lesssim & s(\lambda_h,\lambda_h)^{\frac 1p}(\|\varphi-{\cal Q}_h \varphi\|_{0,q}+{h\color{red}^q}\|\nabla(\varphi-{\cal Q}_h \varphi)\|_{0,q})\\
 &\lesssim & h^{m+1} s(\lambda_h,\lambda_h)^{\frac 1p}\|\varphi\|_{m+1,q}.
\end{eqnarray*}
 Following the same argument, there holds
\begin{eqnarray*}
|I_3| \lesssim h^{m+1} s(\lambda_h,\lambda_h)^{\frac 1p}\|\varphi\|_{m+1,q},\quad |I_4| \lesssim h^{m+2} s(\lambda_h,\lambda_h)^{\frac 1p}\|\varphi\|_{m+1,q}.
\end{eqnarray*}
Here $m=1$ for $k>1$ and $m=0$ for $k=1$. It is so because there holds $P_1(T)\subset M_h(T)$ for $k>1$ and $M_h(T)$ consists of only piecewise constants for $k=1$.

By combining all the estimates for $I_i, i\le 4$ and the estimate \eqref{sestimate} in Theorem \ref{theoestimate-1} we arrive at
\begin{equation}\label{eq:12}
|(\lambda_0, \theta)| \lesssim h^{k(q-1)+m+1}\|u\|^{q/p}_{k,q}\|\varphi\|_{m+1,q},\ \ 0\le m\le 1.
\end{equation}
The estimate \eqref{es:2} then follows from the $W^{2,q}(\Omega)$-regularity  \eqref{regularity:1} with $m=0$  and $m=1$, respectively. This completes the proof.
\end{proof}

\begin{theorem}\label{theo:2}
Let $u$ and $(u_h;\lambda_h) \in M_h\times W_h^0$ be the  solutions of  \eqref{model} and (\ref{32})-(\ref{2}), respectively.
Assume that the dual problem \eqref{dual-problem:new} has the $W^{2,q}(\Omega)$-regularity with the a priori estimate \eqref{regularity:1}. Then
\begin{equation}\label{est:3}
   \|\nabla \lambda_0\|_{0,p}\le C h^{k(q-1)+1}\|u\|^{q/p}_{k,q}.
\end{equation}
\end{theorem}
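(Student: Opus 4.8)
The plan is to bound $\|\nabla\lambda_0\|_{0,p}$ by a duality argument analogous to the one used for $\|\lambda_0\|_{0,p}$ in Theorem \ref{theo:1}, but now testing against the gradient rather than the function itself. Since $(L^p)^d$ has dual $(L^q)^d$, I would write
\[
\|\nabla\lambda_0\|_{0,p} = \sup_{\mathbf 0\neq\bw\in [L^q(\Omega)]^d} \frac{(\nabla\lambda_0,\bw)}{\|\bw\|_{0,q}},
\]
and for each fixed vector field $\bw\in[L^q(\Omega)]^d$ set up an auxiliary problem whose solution $\varphi$ couples to $\nabla\lambda_0$ through integration by parts. Concretely, I would either take $\varphi$ to solve $-\Delta\varphi+\bbeta\cdot\nabla\varphi = \nabla\cdot\bw$ (so that $(\nabla\lambda_0,\bw)$ reduces, after integrating by parts on each element and accounting for the boundary jump terms, to the same kind of element/boundary pairings already handled), or more directly integrate $(\nabla\lambda_0,\bw)_T = -(\lambda_0,\nabla\cdot\bw)_T + \langle\lambda_0,\bw\cdot\bn\rangle_{\partial T}$ and feed the volume term into the representation formulas \eqref{e:11} and \eqref{e:11:05}. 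The boundary term $\langle\lambda_0,\bw\cdot\bn\rangle_{\partial T}$ must then be recombined with the consistency terms produced by the weak-operator identities so that it can be controlled by $s(\lambda_h,\lambda_h)^{1/p}$.

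After this reduction, the estimate follows the identical three-ingredient template of Theorem \ref{theo:1}: the leading term vanishes by \eqref{2} (the orthogonality $b(v,\epsilon_h)=0$ together with the commutativity \eqref{l}--\eqref{l:2}), and the remaining terms are pairings of the stabilizer-controlled quantities $\nabla\lambda_0\cdot\bn-\lambda_n$ and $\lambda_0-\lambda_b$ against projection-error quantities $(I-{\cal Q}_h)\varphi$ and $\nabla(I-{\cal Q}_h)\varphi$. Each such term I would split by the Cauchy--Schwarz (H\"older) inequality with exponents $p$ and $q$, bound the $\lambda$-factor by $s(\lambda_h,\lambda_h)^{1/p}$ after inserting the stabilizer weights $h_T^{1-2p}$ and $h_T^{1-p}$, and bound the $\varphi$-factor by the trace inequality \eqref{trace-inequality} and the standard projection approximation estimate in $L^q$. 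Invoking the key bound $s(\lambda_h,\lambda_h)\le Ch^{qk}\|\nabla^k u\|_{L^q(\Omega)}^q$ from Theorem \ref{theoestimate-1} and the $W^{2,q}$-regularity \eqref{regularity:1} then yields $|(\nabla\lambda_0,\bw)|\lesssim h^{k(q-1)+1}\|u\|_{k,q}^{q/p}\|\bw\|_{0,q}$, whence \eqref{est:3}.

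The main subtlety, and the place where the gradient estimate genuinely differs from the $L^p$ estimate, is accounting correctly for the single power of $h$ that is lost relative to \eqref{es:2}. In Theorem \ref{theo:1} the exponent was $k(q-1)+m+1$ with $m=1$ available whenever $k>1$, giving the extra $h^2$; here the target exponent $k(q-1)+1$ is exactly one order lower, reflecting that differentiating $\lambda_0$ costs one power of $h$. Tracking this requires care in choosing the auxiliary problem's right-hand side and in deciding which projection-approximation order to use: I expect the dominant contribution to come from the term pairing $\lambda_0-\lambda_b$ (weighted by $h_T^{1-2p}$) against $(I-{\cal Q}_h)\varphi$, and I must verify that the $h$-powers from the stabilizer weight, the trace inequality, and the single-order projection bound $\|(I-{\cal Q}_h)\varphi\|_{0,q}\lesssim h\|\varphi\|_{1,q}$ combine to exactly $h^{k(q-1)+1}$ rather than the higher power obtained before. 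Establishing this exponent bookkeeping cleanly, while ensuring the boundary term $\langle\lambda_0,\bw\cdot\bn\rangle_{\partial T}$ arising from the integration by parts against $\bw$ is fully absorbed into stabilizer-controllable quantities, will be the crux of the argument.
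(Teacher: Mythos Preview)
Your overall strategy---duality on $(\nabla\lambda_0,\bw)$, setting $\theta=-\nabla\cdot\bw$ in the auxiliary problem \eqref{dual-problem:new}, and reusing the representation that led to \eqref{eq:12}---is exactly the paper's approach, and your exponent bookkeeping is correct: one simply invokes \eqref{eq:12} with $m=0$, so that $\|\varphi\|_{1,q}$ appears and the extra factor of $h$ relative to Theorem~\ref{theo:1} is explained.

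The one idea you are missing is how the paper disposes of the boundary term $\sum_T\langle\lambda_0,\bw\cdot\bn\rangle_{\partial T}$ that you correctly flag as the crux. The paper does \emph{not} try to absorb it into stabilizer-controlled quantities; instead it restricts the test field to $\eta\in[C^1(\Omega)]^d$ with $\eta=0$ on the entire mesh skeleton ${\cal E}_h$. With this restriction the element-wise integration by parts gives $(\nabla\lambda_0,\eta)=-(\lambda_0,\nabla\cdot\eta)=(\lambda_0,\theta)$ with \emph{no} boundary contribution, and \eqref{eq:12} with $m=0$ applies verbatim, yielding $|(\nabla\lambda_0,\eta)|\lesssim h^{k(q-1)+1}\|u\|_{k,q}^{q/p}\|\varphi\|_{1,q}\lesssim h^{k(q-1)+1}\|u\|_{k,q}^{q/p}\|\eta\|_{0,q}$. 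Since ${\cal E}_h$ has Lebesgue measure zero, such $\eta$ are dense in $[L^q(\Omega)]^d$, and the estimate \eqref{est:3} follows.

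Your alternative plan---writing $\lambda_0=(\lambda_0-\lambda_b)+\lambda_b$ and bounding $\sum_T\langle\lambda_0-\lambda_b,\bw\cdot\bn\rangle_{\partial T}$ via the stabilizer---does not close: after inserting the weight $h_T^{1-2p}$ and applying the trace inequality \eqref{trace-inequality} to $\bw$, the resulting bound involves $\|\bw\|_{0,q}+h\|\nabla\bw\|_{0,q}$, not $\|\bw\|_{0,q}$ alone, so the duality sup over $\bw\in[L^q]^d$ would not be finite. The skeleton-vanishing restriction is precisely what circumvents this.
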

\begin{proof} For any given function ${\bf \eta}\in [C^1(\Omega)]^d$ with $\eta=0$ on ${\cal E}_h$, let $  \varphi$ be the solution of the dual problem \eqref{dual-problem:new} with
$\theta=-\nabla\cdot \eta$. It is easy to see that
\begin{eqnarray*}
   (\nabla \lambda_0, {\bf \eta})=-(\lambda_0, \nabla\cdot {\bf \eta})=(\lambda_0, \theta).
\end{eqnarray*}
  In light of \eqref{eq:12}, we get
  \[
      |(\nabla\lambda_0, {\bf \eta}) |\lesssim h^{k(q-1)+1}\|u\|^{q/p}_{k,q}\|\varphi\|_{1,q}
      \lesssim  h^{k(q-1)+1}\|u\|^{q/p}_{k,q}\|\eta\|_{0,q}
  \]
 with $\varphi$ the solution of \eqref{dual-problem:new}.
 As the set of all such $\eta$ is dense in $L^q(\Omega)$, hence
  \[
     \|\nabla \lambda_0\|_{0,p}\lesssim h^{k(q-1)+1}\|u\|^{q/p}_{k,q}.
  \]
     This completes the proof.
\end{proof}

\section{Numerical Results}\label{Section:NR}
Our numerical experiments are based on the PDWG algorithm \eqref{32}-\eqref{2} with $k=1,2$ for the finite element spaces $W_h$ and $M_h$ defined in \eqref{wh}-\eqref{mh}. The system of nonlinear equations \eqref{32}-\eqref{2} is solved by using an iterative scheme similar to that for the $L^1$ minimization problem in \cite{Vogel}. Specifically, given an approximation $(u_h^m,\lambda_h^m)$ at step $m$, the scheme shall compute a new approximate solution $(u_h^{m+1},\lambda_h^{m+1})\in M_h\times W_h^0$ such that
\begin{eqnarray*}
\tilde s(\lambda^{m+1}_h, \sigma)+b(u^{m+1}_h, \sigma)&=& (f, \sigma_0) - \langle g, \sigma_n  \rangle_{\partial \Omega}, \qquad \forall \sigma\in W_{h}^0,\\
b(v, \lambda^{m+1}_h)&=&0,\qquad \qquad \qquad   \qquad \qquad \forall v\in M_h,
\end{eqnarray*}
 where
 \begin{eqnarray*}
   \tilde s(\lambda^{m+1}_h, \sigma)&=&\sum_{T\in{\cal T}_h}h_T^{1-2p}\int_{\partial T} (|\lambda^m_0-\lambda^m_b|+\epsilon)^{p-2}(\lambda^{m+1}_0-\lambda^{m+1}_b)(\sigma_0-\sigma_b)ds\\
 &+ & \sum_{T\in{\cal T}_h} h_T^{1-p}\int_{\partial T}  (|\nabla\lambda^m_0 \cdot \bn-\lambda^m_n|+\epsilon)^{p-2}(\nabla \lambda^{m+1}_0 \cdot \bn-\lambda^{m+1}_n) (\nabla \sigma_0 \cdot \bn-\sigma_n)ds.
\end{eqnarray*}
Here $\epsilon$ is a small, but positive constant. All the numerical results are obtained with $\epsilon=10^{-3}$ if not otherwise stated. Various approximation errors are computed for $u_h$ and $\lambda_h$, including the $L^q$ error for $e_h:=u-u_h$, and the $W^{2,p},\ W^{1,p},$ and $ L^p$ errors for $\lambda_h$. The finite element partition ${\cal T}_h$ is obtained through a successive refinement of a coarse triangulation of the domain, by
dividing each coarse element into four congruent sub-elements by connecting the midpoints of the three edges of the triangle. The right-hand side function, the boundary condition are calculated from the exact solution.

\begin{example}\label{Example1}
The domain in the model problem \eqref{model} is given by $\Omega=(0,1)^2$. Vanishing convection $\bbeta=0$ is considered in this test problem. The functions $f$ and $g$ are chosen so that the exact solution is given by $u=\sin(x)\sin(y)$.
\end{example}

Tables \ref{1}-\ref{3} illustrate the approximation error and the rate of convergence for the primal variable $u_h$ and the dual variable $\lambda_0$ with $k=1,2$ and $p=1,\cdots, 5$. For $p>1$, one observes a convergence rate of $\mathcal{O}(h^{s+1})$ for the error $\|e_h\|_{0,q}$ with $s=k-1, k-2$, which is consistent with the theory shown in Theorem \ref{theoestimate}. For $p=1$ and $k=2$, one observes a convergence rate of $\mathcal{O}(h^{s+1})$ for the error $\|e_h\|_{0,\infty}$ - an optimal order of convergence in the maximum norm. For the dual variable approximation $\lambda_0$, the tables suggest the following rates of convergence:
$\mathcal{O}(h^{p})$ for $\|\lambda_0\|_{2,p}$, $\mathcal{O}(h^{p+1})$ for $\|\lambda_0\|_{1,p}$, and $\mathcal{O}(h^{p+2})$ for $\|\lambda_0\|_{0,p}$ with $k=2,s=k-1$. For the case of $s=0$ with $k=1,2$, one observes a convergence of $\mathcal{O}(h^{p-1})$ for $\|\lambda_0\|_{2,p}$ and $\mathcal{O}(h^{p})$ for $\|\lambda_0\|_{m,p}, m=0,1$. In other words, this numerical experiment suggests $\|\lambda_0\|_{m,p} = \mathcal{O}(h^{p+2-m}),m\le 2,$ when the approximating space $W_h(T)$ contains linear functions on each element. The convergence is reduced to $\mathcal{O}(h^{p+1-\max(1,m)})$ if only piecewise constant functions are seen in $W_h(T)$.
We emphasize that the numerical dual variable $\lambda_0$ outperforms the theory predicted in Theorems \ref{theo:1}-\ref{theo:2} with rates depending on $p$. This $p$-dependence of the convergence remains mysterious to the authors.


 \begin{table}[h] 
\caption{Numerical error and rate of convergence for the $L^p$-PDWG method with $k=2, s=k-1$ for Example 1.}\label{1} \centering
\tiny
\begin{threeparttable}
        \begin{tabular}{c |c c c c c c c cc }
        \hline
    & h&   $\|\lambda_0\|_{2,p}$ & rate& $\|\lambda_0\|_{1,p}$  & rate& $\|\lambda_0\|_{0,p}$ & rate& $\|e_h\|_{0,q}$ &rate   \\
       \hline \cline{2-10}

            &3.54e-01  & 2.05e-03  &     --& 2.58e-04     &      --   &1.50e-04      &   --& 1.48e-02    &        --\\
             &1.77e-01 &  9.95e-04 &  1.04&   4.89e-05 &  2.40 &   3.60e-05 &  2.06&  3.92e-03&   1.92 \\
$p=1$  & 8.84e-02 &  4.92e-04 &  1.02&   1.05e-05&   2.22&  8.85e-06 &  2.02&   1.00e-03 &  1.97 \\
           & 4.42e-02  & 2.45e-04  & 1.06 &  2.42e-06 &  2.11& 2.19e-06   &2.01& 2.53e-04  & 1.98 \\
           & 2.21e-02 &  1.22e-04&   1.00&  5.81e-07  & 2.06&  5.46e-07   &2.01&   6.37e-05&   1.99 \\
 \hline
            & 3.54e-01 &  1.05e-02 &   --    &1.82e-03    &    --    &   4.75e-04  &    --    &4.01e-03   &     -- \\
            & 1.77e-01 &  2.54e-03 &  2.05&   1.79e-04 &  3.35 &  2.95e-05 &  4.01 &  9.96e-04&   2.01 \\
$p=2$  & 8.84e-02 &  6.26e-04  & 2.02 &  1.96e-05 &  3.18  &  1.84e-06&   4.00 &  2.48e-04 &  2.00 \\
            & 4.42e-02 &  1.55e-04 &  2.01&  2.30e-06&   3.09 &  1.15e-07 &  4.00 &  6.20e-05  & 2.00 \\
           & 2.21e-02 &  3.87e-05 &  2.00&  2.79e-07 &  3.05  & 7.20e-09 &  4.00 &  1.55e-05 &  2.00  \\
 \hline
            &  3.54e-01  &  7.36e-02  &      --     & 1.12e-02    &        -   &1.35e-03    &  --    &   3.22e-03  &  --  \\
            &   1.77e-01  &   9.05e-03  &  3.02  &  5.52e-04  &  4.34   & 2.65e-05  &  5.67  &  7.60e-04 &  2.08\\
$p=3$  &   8.84e-02   & 1.03e-03  &  3.14  &  2.76e-05  &  4.32  & 5.77e-07  &  5.52   & 1.77e-04 &  2.11\\
            &   4.42e-02  &   1.12e-04   & 3.19  &  1.39e-06   & 4.31   & 1.42e-08   & 5.35   & 4.04e-05 &  2.13\\
           &   2.21e-02     &1.21e-05  &  3.22  &  7.46e-08  &  4.22 &  3.85e-10   & 5.12   & 9.35e-06  & 2.11\\
     \hline
            & 3.54e-01   &  4.27e-01  &     --   &6.56e-02     &     -- &  3.00e-03  &        -- & 2.96e-03   &         -- \\
            & 1.77e-01    &  2.73e-02 &  3.97 &  1.50e-03 &  5.45 &  4.83e-05 &  5.96&   6.05e-04&   2.29 \\
$p=4$   &  8.84e-02  &  1.20e-03 &  4.51  & 3.65e-05 &  5.36&   7.17e-07&   6.07&   1.37e-04 &  2.14 \\
            &4.42e-02    & 6.39e-05 &  4.23 &  1.23e-06  & 4.90  & 1.17e-08  & 5.94  & 3.36e-05  & 2.03 \\
           & 2.21e-02    &  3.83e-06 &  4.06&  4.04e-08&   4.93  & 1.86e-10  & 5.97&  8.37e-06 &  2.00 \\
     \hline
            &3.54e-01 &  2.73e-02 &  -- &  4.79e-02      & --     & 5.48e-03  &--&   3.91e-03   &--\\
 $p=5$ &  1.77e-01&   6.50e-02 & -1.25&   4.14e-03&   3.53 &  1.51e-04&   5.19 &  5.41e-04 &  2.85\\
            & 8.84-02&   1.84e-03.  &   5.14&   8.16e-05 &  5.66  &1.56e-06 &  6.60 &  1.31e-04 &  2.04\\
           & 4.42e-02&   5.75e-05 &  5.00&  1.32e-06 &  5.94 & 1.25e-08  & 6.97 &  3.28e-05  & 2.00\\
  \hline
 \end{tabular}
 \end{threeparttable}
\end{table}

 \begin{table}[h] 
\caption{Numerical error and rate of convergence for the $L^p$-PDWG method with $k=2, s=k-2$ for Example 1.}\label{ex22}\centering
\tiny
\begin{threeparttable}
        \begin{tabular}{c |c c c c c c c cc }
        \hline
   & h&   $\|\lambda_0\|_{2,p}$ & rate& $\|\lambda_0\|_{1,p}$  & rate& $\|\lambda_0\|_{0,p}$ & rate& $\|e_h\|_{0,q}$ &rate   \\
       \hline \cline{2-10}

          &   3.54e-01.  &  4.27e-02   &    --  &1.23e-01 &  --     &  3.23e-03  &     --  & 1.02e-01 &     --     \\
           & 1.77e-01   &  4.16e-02 &  0.04  & 1.33e-01&  -0.11&   2.08e-03&   0.64&   5.46e-02&   0.90\\
 $p=1$ &  8.84e-02 & 4.02e-02 &  0.05 &  1.40e-01  & -0.06 &  1.60e-03&  0.38 &  2.82e-02&   0.95\\
          &  4.42e-02   &  3.90e-02 &  0.04 & 1.43e-01 & -0.04 &  1.44e-03 &  0.14 &  1.45e-02  & 0.96\\
           &  2.21e-02 &  3.85e-02 &  0.02 &  1.45e-01 & -0.02&  1.40e-03&   0.04 &  7.36e-03 &  0.97\\
           \hline
           &3.54e-01  &  8.54e-02    &         --   & 2.75e-01   &   --   & 1.32e-02     &    -- & 1.04e-01  &          --\\
            & 1.77e-01 &   3.70e-02 &   1.21  &  7.81e-02  &   1.82 &   2.37e-03  &  2.47  &  5.44e-02  &  0.93\\
 $p=2$ & 8.84e-02 &   1.60e-02  &  1.21  &  2.08e-02  &  1.91  &  4.95e-04  & 2.26  &  2.78e-02  &  0.97\\
           &  4.42e-02  &  7.12e-03  &  1.17  &  5.37e-03  &  1.95  &  1.16e-04   & 2.09  &  1.41e-02  &  0.98\\
          &   2.21e-02  &  3.28e-03  &  1.12  &  1.36e-03  &  1.98  &  2.88e-05   & 2.01 &  7.07e-03 &   0.99\\
\hline
             & 3.54e-01 &   3.12e-01 &         --   & 5.23e-01    &      --  & 3.21e-02  &      --  & 1.00e-01   &        --  \\
              &1.77e-01 &   1.38e-01  &  1.18  &  6.50e-02  &  3.01 &   2.50e-03 &   3.68   & 5.31e-02   &  0.92\\
 $p=3$   &8.84e-02 &   2.89e-02 &   2.26   &  4.56e-03 &   3.83  &  1.36e-04   & 4.20  &  2.71e-02  &0.97\\
             & 4.42e-02   & 5.42e-03 &   2.42 &  3.225e-04  &  3.82  &  1.70e-05  &  3.01    &1.35e-02 &  1.01\\
            &2.21e-02   & 1.01e-03 &   2.43  &  2.47e-05 &   3.71  &  2.29e-06  &  2.89   & 6.66e-03   &   1.02\\
\hline
         &3.54e-01&   2.36e-01 &       -   &1.49e-01   &     --  & 2.33e-02       &   -- & 1.12e-01 &    --\\
         & 1.77e-01&   3.08e-01&  -3.84 &  6.84e-02 &  1.12 &  3.75e-03 &  2.63 &  5.02e-02  & 1.16\\
$p=4$ &8.84e-02&   1.80e-02 &  4.10&   2.06e-03&   5.05&   2.52e-04  & 3.90 &  2.48e-02  & 1.02\\
          &4.412e-02 &  7.86e-04 &  4.51&   7.88e-05 &  4.71&   1.47e-05 &  4.10 &  1.28e-02  & 0.96\\
         & 2.21e-02 &  5.87e-05&   3.74&   4.13e-06 &  4.25 &   9.02e-07 &  4.03  & 6.46e-03 &  0.98\\
\hline
         &3.54e-01 &  5.21e-03 &       -- &  1.23e-02     &     --&   3.61e-03    &    -   &4.38e-01  &          --\\
           & 1.77e-01&   2.57e-01 & -5.62&   5.13e-02  &-2.06&   2.80e-03&   0.37  & 5.10e-02  & 3.10\\
 $p=5$  & 8.84e-02 &  1.32e-02&   4.28&   2.16e-03 &  4.57&   4.07e-04&   2.78 &  2.38e-02  & 1.10\\
            &  4.42e-02 &  6.97e-04 &  4.24 &  6.62e-05 &  5.03&   1.41e-05 &  4.85  & 1.23e-02  & 0.95\\
           &   2.21e-02 &  3.87e-05  & 4.17  & 2.02e-06 &  5.03&   4.41e-07&   5.00  & 6.24e-03 &  0.98\\
    \hline
 \end{tabular}
 \end{threeparttable}
\end{table}

 \begin{table}[h] 
\caption{Numerical error and rate of convergence for the $L^p$-PDWG method with $k=1, s=k-1$ for Example 1.}\label{3}\centering
\tiny
\begin{threeparttable}
        \begin{tabular}{c | c c c c c c c }
        \hline
    & h&   $\|\lambda_0\|_{1,p}$  & rate& $\|\lambda_0\|_{0,p}$ & rate& $\|e_h\|_{0,q}$ &rate   \\
       \hline \cline{2-8}
           &3.54e-01   &        1.19e-01 &  --    & 3.11e-03  &   --   &1.12e-01      &     --\\
           & 1.77e-01   &      1.31e-01  & 0.14  & 1.77e-03 &  0.81 &  6.08e-02 &  0.89\\
 $p=1$ &  8.84e-02 &      1.39e-01&  -0.08&   1.03e-03&  0.78&   3.60e-02  & 0.76\\
           &4.42e-02 &         1.43e-01 & -0.04 &  8.77e-04  & 0.23  & 2.22e-02 &  0.70\\
         & 2.21e-02  &        1.45e-01 & --0.02&  1.00e-03 & -0.20  & 1.34e-02  & 0.72\\
\hline
   & 3.54e-01  &          2.50e-01  &    --&1.14e-02   &      --  & 3.45e-02   &           \\
  & 1.77e-01  &          7.44e-02  & 1.75 & 2.07e-03  & 2.46&   1.75e-02 &  0.98\\
$p=2$  & 8.84e-02  &             2.04e-02&   1.87 &  4.83e-04&   2.10 &  8.46e-03&   1.05\\
  & 4.42e-02  &            5.32e-03&   1.94&  1.22e-04 &  1.98 &  4.08e-03 &  1.05\\
   &2.21e-02 &           1.36e-03 &  1.97 &  3.10e-05  & 1.98 & 2.01e-03  & 1.03\\

\hline
             & 3.54e-01 &     4.68e-01  &    --  & 2.89e-02     &      --   &3.19e-02    &        --\\
             & 1.77e-01   &    5.87e-02  & 2.99 &  2.57e-03  &     3.49&   1.54e-02 &  1.06\\
$p=3$   & 8.84e-02  &     4.18e-03&   3.81 &  1.14e-04 &   4.50 & 7.48e-03  & 1.04\\
             & 4.42e-02 &      2.81e-04 &  3.90  &8.05e-06  & 3.82&  3.73e-03  & 1.00\\
            & 2.21e-02&       1.96e-05 &  3.84 &  1.27e-06 &  2.66  &  1.86e-03  & 1.00\\

\hline
              & 3.54-01 &     1.41e-01  &       --   &1.67e-02  &   --  & 3.38e-02  &          --\\
             & 1.77e-01 &     5.22e-02  & 1.44&   2.96e-03&   2.50&   1.56e-02  & 1.12\\
 $p=4$  & 8.84e-02 &      1.54e-03 &  5.08&   1.90e-04&   3.96&   7.52e-03 &  1.05\\
             & 4.42e-02 &       6.88e-05 &  4.48 &  1.37e-05 &  3.79 &  3.75e-03   &1.00\\
          &  2.21e-02 &        3.99e-06  & 4.11 &  8.87e-07 &  3.95 & 1.88e-03 &  1.00\\

\hline
           &3.54e-01       &          2.62e-02  &    --& 4.16e-03    &        --  & 6.20e-02 &              --\\
           &  1.77e-01    &          4.03e-02 & -0.62 &   3.01e-03 &  0.47 &  1.70e-02 &  1.87\\
$p=5$ &   8.84e-02  &           1.88e-03 &  4.42&   3.91e-04 &  2.94&   7.76e-03&   1.13\\
           &   4.42e-02 &            6.35e-05 &  4.89 &  1.40e-05&   4.80&   3.86e-03 &  1.01\\
          &  2.21e-02  &          1.98e-06 &  5.00 &  4.40e-07  & 4.99 &  1.93e-03  & 1.00\\
      \hline
 \end{tabular}
 \end{threeparttable}
\end{table}

\begin{example}\label{Example2} The domain in this test case is given by $\Omega=(0,1)^2$. The convection term has the following form $\bbeta=(-y,x)$.
The functions $f$ and $g$ are chosen such that the exact solution to the elliptic problem is given by $u= \frac 12 \sin(x+y)+\cos(x-y)+\frac 32$.
\end{example}

Tables \ref{ex3:1}-\ref{ex3:2} show the approximation error and rates of convergence for the primal variable $u_h$ and the dual variable $\lambda_0$ with $k=1,2$ and $p=1,\cdots, 5$. As the model problem contains a non-trivial convection term, the space $W_h$ is taken as \eqref{mh} with $s=k-1$. From Tables \ref{ex3:1}-\ref{ex3:2} we observe the same convergence phenomenon as that for the purely diffusive equation in Example 1. More precisely, it is observed that the error $\|e_h\|_{0,q}$ converges to zero at the rate of $\mathcal{O}(h^{s+1})$ for $p>1$, which is consistent with the theory developed in Theorem \ref{theoestimate}. For $\lambda_0$, it appears that the convergence is dependent upon the value of $p$: $\mathcal{O}(h^{p+2-m})$ for $k=2$ and $\mathcal{O}(h^{p+2-\max(1,m)})$ for $k=1$ in the metric $\|\lambda_0\|_{m,p}, m\le 2$. In other words, the construction of the finite element space $M_h$ has effect on the convergence of the dual variable $\lambda_0$. Like in Example \ref{Example1}, the numerical convergence for the dual variable $\lambda_0$ is faster than the theory predicted in Theorems \ref{theo:1}-\ref{theo:2} with non-trivial convection terms in the model equation.

Tables \ref{ex3:1}-\ref{ex3:2} also show the approximation error and convergence rates for the $L^1$-PDWG method, i.e., $p=1$. For the case of $k=2$ and $s=1$, we see an optimal order of convergence in the maximum norm for the primal variable. Like Example \ref{Example1}, the convergence for the dual variable $\lambda_0$ varies with respect to the choice of $M_h$. For $s=1$, Table \ref{ex3:1} shows a convergence rate of $\mathcal{O}(h^{p+2-\max(1,m)})$ for the error $\|\lambda_0\|_{m,p}$ with $m\le 2$. For $s=0$, no convergence is seen from Table \ref{ex3:2} for $\|\lambda_0\|_{m,p}, \ m\le 1$.

\begin{table}[h]
\caption{Numerical error and rate of convergence for the $L^p$-PDWG method with $k=2, s=k-1$ for Example 2.}\label{ex3:1} \centering
\tiny
\begin{threeparttable}
        \begin{tabular}{c |c c c c c c c cc }
        \hline
   & h&   $\|\lambda_0\|_{2,p}$ & rate& $\|\lambda_0\|_{1,p}$  & rate& $\|\lambda_0\|_{0,p}$ & rate& $\|e_h\|_{0,q}$ &rate   \\
       \hline \cline{2-10}
        &3.54e-01 &  4.87e-03 &     --  & 8.18e-04 &   --  & 8.78e-04  &       --  & 2.47e-02  &    --      \\
        &1.77e-01&   2.45e-03 & 0.99 &  2.18e-04 &  1.91  & 2.17e-04 &  2.01 &  6.16e-03  & 2.00\\
 $p=1$  &8.84e-02&   1.23e-03 &  1.00 &  5.58e-05 &  1.96&  5.40e-05 &  2.01 &  1.58e-03 &  1.96\\
        &4.42e-02 &  6.14e-04 &  1.00 &  1.41e-05 &  1.98 & 1.35e-05 &  2.00 &  3.98e-04 &  1.99\\
        & 2.21e-02 &  3.07e-04 &  1.00 &  3.54e-06 &  1.99&  3.36e-06 &  2.00 &  1.00e-04 &  1.99\\

  \hline
         & 3.54e-01&   2.20e-02 &     -- &  2.26e-03 &     -- &  2.17e-03 &     -- &  5.73e-03 &    --     \\
         & 1.77e-01&   5.46e-03 &  2.01 &  2.37e-04  & 3.26  & 1.36e-04&   4.00 &  1.41e-03 &  2.02\\
  $p=2$  & 8.84e-02&   1.36e-03&   2.00 &  2.74e-05 &  3.11 &  8.49e-06&   4.00&   3.52e-04 &  2.01\\
         &4.42e-02&   3.41e-04&   2.00 &  3.33e-06 &  3.04  & 5.30e-07 &  4.00 &  8.80e-05  & 2.00\\
         &2.21e-02&   8.51e-05&   2.00  & 4.11e-07&   3.02 & 3.31e-08&   4.00 &  2.20e-05  & 2.00\\
\hline

        & 3.54e-01&   1.33e-01 &     -- &  1.40e-02&   -- &  5.21e-03 &        -- &  4.70e-03 &   --     \\
        & 1.77e-01 &  1.80e-02 &  2.89 &  8.77e-04&   3.99 &  9.80e-05&   5.73 &  1.16e-03 &  2.02\\
  $p=3$ &8.84e-02 &  2.17e-03 &  3.05 &  5.22e-05&   4.07  & 1.97e-06&   5.64 &  2.87e-04 &  2.01\\
        &4.42e-02 &  2.59e-04&   3.07 &  3.14e-06 &  4.06  & 4.41e-08 &  5.48 &  7.14e-05 &  2.01\\
        &2.21e-02&   3.09e-05 &  3.07  & 1.92e-07 &  4.04  & 1.11e-09&   5.32 &  1.79e-05  & 1.99\\

\hline

         &3.54e-01  & 3.46e-01  &     -- &  4.97e-02 &   --  & 9.70e-03  &     --  & 4.43e-03 &   --     \\
         & 1.77e-01 &  5.86e-02 &  2.56&  3.23e-03 &  3.94 &  1.45e-04 &  6.06 &  1.11e-03 &  2.00\\
  $p=4$ & 8.84e-02  & 3.37e-03 &  4.12 &  9.82e-05 &  5.04 &  1.91e-06 &  6.25 &  2.83e-04 &  1.97\\
        & 4.42e-02 &  2.03e-04&   4.06 &  3.06e-06 &  5.00 &  2.85e-08 &  6.07 &  7.15e-05 &  1.99\\
        &2.21e-02 &  1.25e-05 &  4.01&  9.61e-08 &  4.99&  4.41e-10  & 6.01  & 1.79e-05  & 2.00\\
\hline

       &3.54e-01 &  5.46e-01 &      -- &  9.69e-02 &        --  & 1.38e-02 &       -- &  4.83e-03 &   --     \\
 $p=5$ &1.77e-01 &  1.68e-01 &  1.70 &  1.03e-02&   3.23 &  3.67e-04 &  5.24&   1.13e-03 &  2.10\\
       &8.84e-02 &  6.03e-03 &  4.80 &  1.97e-04&   5.71&   3.70e-06 &  6.63&   2.84e-04 &  1.99\\
       &4.42e-02 &  1.89e-04 &  5.00 &  3.10e-06 &  5.99 &  2.91e-08 &  6.99 &  7.08e-05 &  2.00\\
 \hline
 \end{tabular}
 \end{threeparttable}
\end{table}

\begin{table}[h]
\caption{Numerical error and rate of convergence for the $L^p$-PDWG method with $k=1, s=k-1$ for Example 2.}\label{ex3:2} \centering
\tiny
\begin{threeparttable}
        \begin{tabular}{c | c c c c c c c }
        \hline
    & h&   $\|\lambda_0\|_{1,p}$  & rate& $\|\lambda_0\|_{0,p}$ & rate& $\|e_h\|_{0,q}$ &rate   \\
       \hline \cline{2-8}

        &3.54e-01 &  7.72e-02 &   -- &  1.04e-02 &      --  & 2.10e-01 &    --     \\
        & 1.77e-01&   8.44e-02&  -0.13 &  9.49e-03&   0.14 &  1.22e-01 &  0.78\\
  $p=1$ & 8.84e-02&   8.90e-02&  -0.08 &  9.27e-03&   0.03 &  7.14e-02&   0.78\\
        &4.42e-02 &  9.16e-02 & -0.04&  9.31e-03 & -0.00 &  4.49e-02 &  0.67\\
        &2.21e-02 &  9.29e-02&  -0.02&   9.42e-03 & -0.01 &  2.80e-02 &  0.68\\

\hline

        &3.54e-01 &  1.60e-01 &   --  &  3.09e-02 &  -- &  4.96e-02 &           \\
        &1.77e-01 &  4.76e-02 &  1.75 &  7.83e-03 &  1.98 &  2.47e-02 &  1.00\\
  $p=2$ & 8.84e-02&   1.31e-02&   1.86&   2.02e-03&  1.96 &  1.19e-02&   1.06\\
        &4.42e-02 &  3.47e-03 &  1.92 &  5.14e-04 &  1.97&  5.64e-03 &  1.07\\
        &2.21e-02&   8.90e-04 &  1.96&  1.30e-04 &  1.99 &  2.74e-03&   1.04\\
\hline

        &3.54e-01 &  3.16e-01 &  -- &  7.09e-02 &    -- &  4.43e-02 &          \\
        &1.77e-01 &  4.17e-02 &  2.92 &  8.68e-03 &  3.03 &  2.07e-02 &  1.09\\
  $p=3$ & 8.84e-02&   3.45e-03 &  3.60 &  7.93e-04&  3.45&   9.88e-03&   1.07\\
        &4.42e-02 &  3.04e-04  & 3.51 & 7.64e-05  & 3.38 &  4.86e-03  & 1.02\\
        &2.21e-02&   3.12e-05 &  3.28 &  8.05e-06  & 3.25  & 2.42e-03 &  1.01\\
\hline

        &3.54e-01  & 9.35e-02 &    --  &   4.03e-02 &      -- & 5.01e-02 &   --  \\
         &1.77e-01 &  4.59e-02 &  1.03 &  1.02e-02 &  1.99  & 2.10e-02&   1.25\\
  $p=4$ & 8.84e-02&   3.84e-03 &  3.58 &  9.62e-04 &  3.40  & 9.70e-03&   1.12\\
        &4.42e-02 &  2.39e-04 &  4.01 &  5.76e-05 &  4.06  & 4.73e-03 &  1.03\\
        &2.21e-02 &  1.49e-05 &  4.00&  3.55e-06 &  4.02 &  2.35e-03 &  1.01\\

\hline
          & 3.54e-01 &  6.61e-03&  --&   8.87e-03 &     -- &  2.51e-01 &  --  \\
          &1.77e-01 &  2.82e-02 & -2.09&  4.51e-03 &  0.98 &  2.14e-02  & 3.55\\
   $p=5$ &8.84e-02  & 7.21e-03 &  1.97 & 1.76e-03  & 1.36  & 9.70e-03  & 1.14\\
          & 4.42e-02&   2.51e-04&   4.85&   5.98e-05&   4.88 &  4.70e-03 &  1.04\\
          &2.21e-02 &  7.85e-06&   5.00 &  1.87e-06 &  5.00 &  2.34e-03 &  1.01\\

\hline
 \end{tabular}
 \end{threeparttable}
\end{table}

\begin{example}\label{Example3}
Let $\Omega=(0,1)^2$ and $\Omega_1=(0.25,0.75)^2,\Omega_2=\Omega\backslash\bar\Omega_1$.
Consider the following problem: Find an unknown function $u$ satisfying
\begin{equation}\label{model2}
\begin{split}
-\nabla\cdot(\alpha\nabla u)+\bbeta u =&f,   \quad \ {\rm in}\  \Omega_1\cup\Omega_2,\\
 u=&g,  \quad\ {\rm on}\ \partial\Omega\ \\
[\![  u]\!]_{\Gamma}=0,\ \ \ [\![(\alpha \nabla u-\bbeta u)\cdot \bn]\!]_{\Gamma} =&\psi, \quad\   {\rm on}\ \Gamma={\partial\Omega_1}\cap{\partial\Omega_2},
\end{split}
\end{equation}
where $\bn$ is the unit outward normal of $\Gamma$ with respect to $\Omega_1$, and $[\![  u]\!]_{\Gamma}$ denotes the jump of $u$ across the interface $\Gamma$.
 We take $f=0,\bbeta=(0,0)$ and a piecewise constant function $\alpha$ defined as  $\alpha|_{\Omega_1}=5, \ \alpha|_{\Omega_2}=1$.
 The functions $g$ and $\psi$ are chosen  so that the exact solution to this problem is
given by  $u= e^x\cos(y)+10$.
\end{example}

Based on the variational formulation, we numerically solve the above problem by slightly modifying our algorithm \eqref{32}-\eqref{2} as follows: Find $(u_h;\lambda_h)\in M_h \times W_{h}^0$, such that
\begin{eqnarray*}
s(\lambda_h, \sigma)+b(u_h, \sigma)&=& (f, \sigma_0)- \langle g, \sigma_n \rangle_{\partial \Omega}+\langle \psi, \sigma_b \rangle_{\Gamma}, \qquad \forall \sigma\in W_{h}^0,\\
b(v, \lambda_h)&=&0,\qquad \qquad \qquad   \qquad \qquad \forall v\in M_h,
\end{eqnarray*}
  where
\begin{equation*}
\begin{split}
s(\lambda, \sigma)=&\sum_{T\in{\cal T}_h}h_T^{1-2p}\int_{\partial T} |\lambda_0-\lambda_b|^{p-1}sgn(\lambda_0-\lambda_b)(\sigma_0-\sigma_b)ds\\
&+ \sum_{T\in{\cal T}_h} h_T^{1-p}\int_{\partial T}  |\alpha\nabla \lambda_0 \cdot \bn-\lambda_n|^{p-1}sgn (\alpha\nabla \lambda_0 \cdot \bn-\lambda_n) (\alpha\nabla \sigma_0 \cdot \bn-\sigma_n)ds.\\
b(u, \lambda)=&\sum_{T\in{\cal T}_h}(u, - \bbeta\cdot\nabla_w\lambda - \alpha\Delta_w \lambda)_T.
\end{split}
\end{equation*}

Tables  \ref{4}-\ref{5} show the numerical performance for the primal variable $u_h$ and the dual variable $\lambda_0$ for $k=1,2$ and $p=1,\ldots,5$. It can be seen that, for both the linear (i.e., $k=1$) and quadratic (i.e., $k=2$) PDWG methods, the convergence rate for the error $\|e_h\|_{0,q}$ is of order $\mathcal{O}(h^{s+1})$. This numerical experiment suggests that the theoretical estimate in Theorem \ref{theoestimate} should hold true for elliptic problems with piecewise constant diffusions. Analogously to Example \ref{Example1}, the error $\|\lambda_0\|_{m,p}, m=0,1,2,$ for the dual variable converges to zero at the rate of ${\mathcal O}(h^{p+2-m})$ when $k=2, s=k-1$ and $p\ge 1$. As shown in Tables \ref{6}-\ref{5}, the rate of convergence varies in ways that depend on the value of $p$ for $k=1, s=k-1$ and $k=2,s=k-2$. Note that no convergence was seen from Tables \ref{6} -\ref{5} for the dual variable $\lambda_0$ for the case of $p=1,s=0$.

 \begin{table}[h]
\caption{Numerical error and rate of convergence for the $L^p$-PDWG method with $k=2, s=k-1$ for Example 3.}\label{4} \centering
\tiny
\begin{threeparttable}
        \begin{tabular}{c |c c c c c c c cc }
        \hline
   & h&   $\|\lambda_0\|_{2,p}$ & rate& $\|\lambda_0\|_{1,p}$  & rate& $\|\lambda_0\|_{0,p}$ & rate& $\|e_h\|_{0,q}$ &rate   \\
       \hline \cline{2-10}

           &  3.54e-01&   1.19e-02&        --  & 9.22e-04    &       -- & 8.57e-05 &      --  & 7.176e-02&    --         \\
            & 1.77e-01 &  4.90e-03 &  1.29 &  1.66e-04&   2.47&   1.11e-05  & 2.96&   1.85e-02 &  1.96 \\
$p=1$  & 8.84e-02 &  2.25e-03 &  1.12 &  3.34e-05 &  2.31&   1.40e-06 &  2.99 &  4.58e-03 & 2.01 \\
           & 4.42e-02 &  1.09e-03 &  1.04 &  7.57e-06 &  2.14&   1.75e-07  & 2.99 &  1.18e-03 &  1.95 \\
           &  2.21e-02 &  5.41e-04  & 1.01&   1.82e-06  & 2.06   &2.20e-08 &  3.00&   3.16e-04  & 1.91 \\

\hline
             &   3.54e-01&   5.02e-02&      -- & 7.14e-03  &        --   &7.86e-04  &    --  &1.21e-02&           -- \\
             &  1.77e-01 &  9.29e-03&   2.43&   6.07e-04 &  3.55&   4.77e-05 &  4.04&   3.06e-03 &  1.98 \\
$p=2$  &  8.84e-02&   1.97e-03 &  2.24&   5.62e-05  & 3.43&   2.91e-06&   4.03&   7.68e-04 &  2.00 \\
          &   4.42e-02 &  4.57e-04&   2.11&   5.90e-06  & 3.25 &  1.79e-07&   4.02&   1.92e-04   &2.00 \\
          &  2.21e-02 &  1.11e-04&   2.04&   6.79e-07 &  3.12&  1.11e-08 &  4.01&  4.80e-05   &2.00 \\
\hline

           &3.54e-01 &  6.98e-02&          --   &1.30e-02  &      --   &1.37e-03       &   --     & 7.30e-03&             \\
             & 1.77e-01&   4.48e-03 &  3.96  & 4.41e-04  & 4.88&   2.85e-05  & 5.59 &  1.66e-03  & 2.13 \\
 $p=3$  & 8.84e-02 &  3.59e-04 &  3.64& 1.78e-05 &  4.63&  5.95e-07  & 5.58&   3.70e-04  & 2.17 \\
            & 4.42e-02 &  3.91e-05 &  3.20 &  8.42e-07 &  4.40 &  1.29e-08  & 5.53&   7.99e-05 &  2.21 \\
            & 2.21e-02 &  4.94e-06 &  2.98&   4.63e-08 &  4.19&  2.98e-10 &  5.43 &  1.75e-05 &  2.19 \\
\hline
              &3.54e-01&   9.83e-03 &         -- &1.97e-03  &     --  &1.94e-04 &  -- & 5.61e-03 &     --       \\
              &1.77e-01 &  3.93e-04  & 4.64&   3.17e-05&   5.95&  1.54e-06&   6.98 &  1.11e-03 &  2.33 \\
  $p=4$ &  8.84e-02 &  2.34e-05 & 4.07&   8.58e-07&   5.21&   1.69e-08 &  6.50 & 2.71e-04 &  2.04 \\
             &4.42e-02 &  1.43e-06 &  4.03&   2.81e-08  & 4.93&   2.41e-10&   6.14&   6.76e-05 &  2.00 \\
             & 2.21e-02 &  8.86e-08 &  4.02  & 8.98e-10 &  4.97&   3.67e-12 &  6.03&   1.67e-05&   2.02\\
      \hline
                &3.54e-01  &  6.28e-03   & --  &1.17e-03   &-- &  8.38e-05    & --  &  2.16e-02 &  -- \\
  $p=5$   &1.77e-01   & 1.70e-04   & 5.21  & 1.25e-05  &  6.54  &   4.63e-07 &   7.50  &  4.88e-03 & 2.15 \\
              & 8.84e-02   & 4.92e-06  & 5.11   & 2.05e-07  &  5.94 &   3.49e-09   & 7.05   & 1.25e-03  & 1.96 \\
              &4.42e-02  &1.49e-07   & 5.04    &  3.23e-09 &5.98  & 2.71e-11       &   7.01  & 3.19e-04 &   1.98 \\
  \hline
 \end{tabular}
 \end{threeparttable}
\end{table}

  \begin{table}[h]
\caption{Numerical error and rate of convergence for the $L^p$-PDWG method with $k=2, s=k-2$ for Example 3.}\label{6} \centering
\tiny
\begin{threeparttable}
        \begin{tabular}{c |c c c c c c c cc }
        \hline
   & h&   $\|\lambda_0\|_{2,p}$ & rate& $\|\lambda_0\|_{1,p}$  & rate& $\|\lambda_0\|_{0,p}$ & rate& $\|e_h\|_{0,q}$ &rate   \\
       \hline \cline{2-10}
            & 3.54e-01 &  8.36e-00  &      --& 3.79e-00 &       -- &  3.85e-01   &       --&   6.45e-01 &     --\\
$p=1$  & 1.77e-01 &  1.20e-00 &  2.79&   9.03e-01 &  2.07&   8.98e-02&   2.10&  3.62e-01&   0.83\\
          &8.84e-02  & 1.09e-00 &  0.15 &  8.55e-01 &  0.08  & 8.19e-02 &  0.13  & 3.12e-01&   0.21\\
         & 4.42e-02 &  1.09e-00 &  0.00 &  8.76e-01 & -0.03&   8.30e-02 & -0.02  & 2.69e-01&   0.22\\
\hline
        &  3.54e-01  &  6.49e-01&        --  &  5.77e-01    &     --      & 8.48e-02     &   --  & 1.39e-01    &   --\\
              & 1.77e-01  &  2.19e-01&    1.57  &   1.72e-01    & 1.75   & 2.34e-02   & 1.86 &    8.11e-02 &   0.77\\
 $p=2$  &   8.84e-02  &  1.01e-01&    1.11 &   5.10e-02   & 1.76   & 6.66e-03   & 1.82  &  4.59e-02 &   0.82\\
               &   4.42e-02 &   4.56e-02&    1.14&   1.46e-02  &  1.81  &  1.86e-03  &  1.84&   2.41e-02 &   0.93\\
                &   2.21e-02 &   1.98e-02&    1.20&    3.96e-03  &  1.88  &  4.97e-04  &  1.91&   1.21e-02  &  0.99\\
\hline
           &  3.54e-01  &  1.03e+00  &      --  &  8.11e-01  &          --   & 1.21e-01&     --&    1.21e-01&             --\\
             &  1.77e-01  &  4.74e-01 &   1.12 &   1.64e-01&    2.30&    2.35e-02 &   2.36  &  6.25e-02  &  0.96\\
 $p=3$  & 8.84e-02 &   1.30e-01 &   1.87&   1.37e-02 &   3.58&    1.85e-03&    3.67&   2.93e-02  &  1.10\\
              &   4.42e-02 &   2.63e-02 &   2.30 &   9.76e-04 &   3.82&    1.22e-04&    3.92  &  1.43e-02 &   1.04\\
             &  2.21e-02 &   5.11e-03&    2.37 &   6.87e-05 &   3.83&    7.83e-06 &   3.97&   7.07e-03&    1.01\\
\hline
           & 3.54e-01&    2.90e-02   &    --      &    7.04e-02 &      --  &  7.84e-03   &     --   & 1.19e-01 &            --\\
           & 1.77e-01&    9.24e-01 &  -5.00 &   1.58e-01&   -1.16&    2.34e-02  & -1.58&    5.82e-02 &   1.03\\
$p=4$  &  8.84e-02&    7.81e-02 &   3.60 &   4.66e-03 &   5.08 &   5.39e-04  &  5.44 &   2.74e-02  &  1.09\\
            &   4.42e-02&    3.95e-03 &   4.31&    1.11e-04&    5.39 &   1.32e-05 &   5.35&   1.37e-02  &  1.00\\
   & 2.21e-02&    2.83e-04&    3.80&    3.59e-06&    4.96&   5.06e-07 &   4.71&   6.84e-03  &  1.00\\
  \hline

            &3.54e-01&   1.66e-00&     --  & 2.71e-01   &    -- & 3.93e-02 &       --    &  1.17e-01 &   --  \\
$p=5$   &1.77e-01&   5.55e-02  & 4.90&  9.24e-03 &  4.88&  1.15e-03 &  5.09&   5.40e-02  & 1.20\\
           & 8.84e-02 &  1.83e-03&   4.93 &  8.84e-05 &  6.71&   8.65e-06 &  7.06&  2.69e-02  & 1.00\\
          & 4.42e-02 &  8.89e-05&   4.36 &  1.86e-06 &  5.58&   1.50e-07 &  5.85&   1.35e-02   &1.00\\
    \hline
 \end{tabular}
 \end{threeparttable}
\end{table}

 \begin{table}[h]
\caption{Numerical error and rate of convergence for the $L^p$-PDWG method with $k=1, s=k-1$ for Example 3.}\label{5}\centering
\tiny
\begin{threeparttable}
        \begin{tabular}{c | c c c c c c c }
        \hline
    & h&   $\|\lambda_0\|_{1,p}$  & rate& $\|\lambda_0\|_{0,p}$ & rate& $\|e_h\|_{0,q}$ &rate   \\
       \hline \cline{2-8}

              &3.54e-01 &  4.35e-01&     -- & 4.89e-02    &        --   &5.31e-01  &    --      \\
              &1.77e-01 &  2.52e-01 &  0.79&   2.66e-02&   0.88 &  4.04e-01 &  0.39\\
 $p=1$  &8.84e-02 &  2.46e-01 &  0.03 &  2.43e-02&   0.13 &  3.41e-01  & 0.25\\
             & 4.42e-02&   2.57e-01 & -0.06&   2.45e-02&  -0.01&   2.95e-01 &  0.21\\
            & 2.21e-02 &  2.71e-01&  -0.07&   2.55e-02 & -0.05&   2.56e-01 &  0.20\\

  \hline

    &3.54e-01&   4.70e-01 &       -- &  7.58e-02&       -- &  1.47e-01 &  -- \\
              &1.77e-01 &  1.48e-01&   1.67&   2.13e-02&   1.83 &  8.87e-02 &  0.73\\
  $p=2$ & 8.84e-02&  4.52e-02 &  1.71&   6.04e-03 &  1.82 &  5.41e-02 &  0.71\\
             &4.42e-02&   1.33e-02&   1.77&  1.71e-03 &  1.82 & 3.11e-02  & 0.80\\
            & 2.21e-02&   3.74e-03&   1.83  & 4.72e-04&   1.86&   1.67e-02&   0.90\\
\hline
              &  3.54e-01 &  6.49e-01 &   --     &1.08e-01  &      --   &  1.26e-01  &   --\\
              & 1.77e-01  & 1.40e-01&   2.22 &  2.08e-02&   2.38   & 6.55e-02 &  0.94\\
  $p=3$ & 8.84e-02&   1.22e-02&   3.52&   1.72e-03&   3.60 &  3.10e-02&   1.08\\
             & 4.42e-02&   8.73e-04&   3.80 &  1.20e-04&   3.84  &  1.48e-02 &  1.07\\
            &2.21e-02&   5.78e-05  & 3.92 &  7.69e-06 &  3.96  & 7.17e-03  & 1.04\\
      \hline

           &3.54-01&   8.70e-02     &    --   &1.27e-02 &          --  &1.20e-01  &       --   \\
           & 1.77e-01 &  1.30e-01  &-0.58&   2.08e-02&  -0.71  & 6.11e-02  & 0.98\\
$p=4$ & 8.84e-02&   3.35e-03 &  5.28&  4.82e-04&   5.43 &  2.75e-02 &  1.15 \\
           & 4.42e-02 &  6.33e-05 &  5.73&  9.16e-06&   5.72&   1.37e-02  & 1.01\\
          & 2.21e-02 &  1.87e-06 &  5.08 &  3.38e-07 &  4.76 & 6.83e-03 &  1.00\\

 \hline

             &3.54e-01  &  2.83e-01  &    --  & 4.90e-02   &        --   &1.19e-01    &        -- \\
             &1.77e-01   & 6.13e-03   & 5.53  & 9.44e-04  &  5.70  &  5.40e-02  & 1.15\\
 $p=5$ &  8.84e-02  &  3.75e-05  &  7.35 &   6.05e-06  &  7.29 &   2.69e-02 &   1.00\\
            & 4.42e-02  &  7.05e-07   & 5.73  &  1.34e-07   & 5.49  &  1.35e-02  &  1.00\\
            &2.21e-02   & 2.28e-08  &  4.95  & 4.35e-09   & 4.95 &  6.73e-03  &  1.00\\
 \hline
 \end{tabular}
 \end{threeparttable}
\end{table}

\begin{example}
In this test, we consider the model problem \eqref{model2} in the domain
 $\Omega=(0,1)^2=\Omega_1\cup\Omega_2$, where $\Omega_1$ is the circular domain centered at $(0.5,0.5)$ with radius $0.25$ and $\Omega_2=\Omega\backslash\Omega_1$. The diffusive and convective terms are given by
\[
   \alpha|_{\Omega}=1, \ \ \bbeta|_{\Omega_1}=(0,1),\ \ \bbeta|_{\Omega_2}=(1,0).
\]
 The functions $f, g$ and $\psi$ are chosen so that the exact solution to this problem is $u= e^{x^2+y}$.
\end{example}
   
The same algorithm as that for Example \ref{Example3} was employed for solving this problem. Tables  \ref{ex4:2}-\ref{ex4:1} show the performance of the $L^p$-PDWG for $u_h$ and $\lambda_h$ for $k=1,2$ and $p=1,\ldots,4$. It can be seen that the convergence for the error $\|e_h\|_{0,q}$ is of order $\mathcal{O}(h^{s+1})$ for $k=1,2$ and $p>1$, which suggests that the results of Theorem \ref{theoestimate} may hold true for elliptic problems with piecewise constant or smooth convection. For $\|\lambda_0\|_{m,p}$, Tables \ref{ex4:2}-\ref{ex4:1} demonstrate a convergence of $\mathcal{O}(h^{p+2-m})$ when $k=2$ and $\mathcal{O}(h^{p})$ when $k=1$. It was observed the numerical convergence for the dual variable $\lambda_0$ performs significantly better than the theory shown in Theorems \ref{theo:1}-\ref{theo:2}. Tables \ref{ex4:2}-\ref{ex4:1} also show the convergence of the numerical approximations for $p=1$ and $k=1,2$. The convergence for the dual variable $\lambda_0$ varies according to the choice of $M_h$. For $s=1$,
the convergence for $\|\lambda_0\|_{m,p}$ is at the rate of $\mathcal{O}(h^{p+2-\max(1,m)})$, but for $s=0$, no convergence was observed.

 \begin{table}[h]
\caption{Numerical error and rate of convergence for the $L^p$-PDWG method with $k=2, s=k-1$ for Example 4.}\label{ex4:2} \centering
\tiny
\begin{threeparttable}
        \begin{tabular}{c |c c c c c c c cc }
        \hline
   & h&   $\|\lambda_0\|_{2,p}$ & rate& $\|\lambda_0\|_{1,p}$  & rate& $\|\lambda_0\|_{0,p}$ & rate& $\|e_h\|_{0,q}$ &rate   \\
       \hline \cline{2-10}

      &  3.54e-01  &  4.52e-02 &   --&     5.87e-03&    -- &   3.69e-03  &  --  &  2.34e-01&    --        \\
      &  1.77e-01  &  1.62e-02 &   1.48&   1.27e-03 & 2.21&   8.91e-04 &  2.05 &  8.70e-02 &  1.43\\
 $p=1$&  8.84e-02  &  6.81e-03 &   1.25&   2.94e-04 & 2.11&   2.21e-04 &  2.01 &  2.73e-02&  1.67\\
      & 4.42e-02  &  3.13e-03 &   1.12&  7.05e-05 &  2.06&  5.52e-05&  2.00&   7.88e-03 & 1.79\\
      & 2.21e-02  &  1.51e-03 &   1.06&    1.72e-05&   2.03&    1.38e-05&  2.00&   2.13e-03&   1.89\\

\hline

       &3.54e-01 &  2.10e-01 &    -- &  3.29e-02&   -- &  1.24e-02 &    --&  3.44e-02 &          \\
       &1.77e-01 &  3.87e-02 &  2.44 &  2.86e-03&   3.52&   7.94e-04 &  3.97&   8.92e-03&   1.95\\
 $p=2$ & 8.84e-02&   7.90e-03&   2.29 &  2.62e-04&   3.45&   4.99e-05&   3.99&   2.25e-03&   1.99\\
       &4.42-02 &  1.75e-03 &  2.17 &  2.62e-05 &  3.32&   3.12e-06 &  4.00&   5.636e-04 &  2.00\\
       &2.21e-02 &  4.14e-04 &  2.08 &  2.88e-06&   3.19&   1.95e-07&   4.00 &  1.41e-04 &  2.00\\

\hline

         &3.54e-01&   4.75e-01&    --&    9.00e-02 &    -- &   2.47e-02 &    -- &  2.38e-02&        \\
         &1.77e-01&   7.43e-02&   2.68&   7.34e-03 &  3.62&   7.95e-04&   4.96&   5.18e-03& 2.20\\
  $p=3$ & 8.84e-02&    6.28e-03&  3.56 &  3.22e-04 &  4.51&   1.54e-05 &  5.69&   1.19e-03&  2.12\\
        &4.42e-02&  8.25e-04&   2.93&     1.64e-05&  4.30 &    3.08e-07& 5.65&   2.76e-04 & 2.10\\
          &2.21e-02&   4.48e-05&   2.75&     9.48e-07&  4.11&  6.75e-09 & 5.51&   6.55e-05 &  2.08\\

\hline

       &3.54e-01&   2.87e-01&        -- &  6.20e-02 &       -- &  1.34e-02&        -- & 1.97e-02&       \\
       &1.77e-01 &  1.21e-01 &  1.25 &  1.37e-02 &  2.18 & 8.25e-04 &  4.02 & 4.53e-03&   2.12\\
$p=4$  & 8.84e-02&   1.91e-02&   2.66 &  5.84e-04&   4.55&  1.18e-05&   6.13&   9.90e-04&   2.19\\
       &4.42e-02&   1.39e-03 &     3.78&     1.97e-05&   4.89&  1.64e-07&  6.17&   2.41e-04&   2.04\\

\hline
 \end{tabular}
 \end{threeparttable}
\end{table}

\begin{table}[h]
\caption{Numerical error and rate of convergence for the $L^p$-PDWG method with $k=1, s=k-1$ for Example 4.}\label{ex4:1}\centering
\tiny
\begin{threeparttable}
        \begin{tabular}{c | c c c c c c c }
        \hline
    & h&   $\|\lambda_0\|_{1,p}$  & rate& $\|\lambda_0\|_{0,p}$ & rate& $\|e_h\|_{0,q}$ &rate   \\
       \hline \cline{2-8}
       & 1.77e-01 &  1.07e-00 &  --   &   8.89e-02&   -- &  1.09e-00 &  --\\
 $p=1$ & 8.84e-02 &  8.14e-01 &  0.39 &  4.59e-02 &  0.95 &  7.15e-01&   0.60\\
       & 4.42e-02 &  8.01e-01&   0.02 &  4.37e-02 &  0.07 &  5.42e-01&   0.40\\
       & 2.21e-02 &  8.21e-01 & -0.03 &  4.50e-02 & -0.04&   4.52e-01&   0.26\\
\hline

        & 3.54e-01&   1.16e-00 &     -- &  1.33e-01 &    -- &  3.01e-01&            \\
        &1.77e-01 &  3.46e-01 &  1.74 &  3.50e-02  & 1.93 &  1.57e-01 &  0.94\\
  $p=2$ & 8.84e-02&   1.09e-01 &  1.67  & 1.07e-02&   1.80 &  7.68e-02&   1.03\\
        & 4.42e-02 &  3.23e-02 &  1.75 &  2.78e-03 &  1.85&   3.54e-02 &  1.12\\
        &2.21e-02 &  8.87e-03  & 1.86 &  7.31e-04&   1.93 &  1.61e-02 &  1.13\\

\hline

        &3.54e-01 &  9.50e-01 &     --&   1.14e-01 &    -- &  2.29e-01 &  --         \\
        &1.77e-01 &  2.37e-01 &  2.00 &  2.91e-02 &  1.96&   1.16e-01 &  0.98\\
  $p=3$ &8.84e-02 &  2.69e-02 &  3.14 &  3.59e-03 &  3.02 &  5.40e-02 &  1.10\\
        & 4.42e-02 &  2.08e-03 &  3.69 &  3.33e-04 &  3.43 &  2.60e-02 &  1.05\\
        & 2.21e-02 &  1.61e-04 &  3.69 &  3.17e-05 &  3.40 &  1.29e-02 &  1.02\\

\hline

       & 3.54e-01 &  7.06e-00 &   --  & 1.24e-00 &    --  & 5.98e-01 &      --     \\
       & 1.77e-01 &  1.67e-01 &  5.41  & 1.77e-02 &  6.13 &  1.35e-01 &  2.14\\
 $p=4$ & 8.84e-02&   1.12e-02 &  3.90 &  2.44e-03 &  2.86 &  5.41e-02 &  1.32\\
       & 4.42e-02&   1.03e-03 &  3.44 &  1.99e-04 &  3.61 &  2.61e-02  & 1.05\\
       & 2.21e-02 &  6.60e-05 &  3.96 &  1.21e-05 &  4.04 &  1.27e-02  & 1.04\\

 \hline
 \end{tabular}
 \end{threeparttable}
\end{table}

\vfill\eject


\end{document}